\newcommand\card{\operatorname{card}}
\newcommand\footnoteref[1]{\protected@xdef\@thefnmark{\ref{#1}}\@footnotemark}
\newtheorem{prop}{Proposition}
\newtheorem{thm}[prop]{Theorem}
\newtheorem{rem}{Remark}
\newtheorem{lem}[prop]{Lemma}
\newtheorem{res}{Result}
\begin{document}

\title[Focusing Barely Supercritical Schr\"odinger Equations] {Scattering above energy norm of a
focusing size-dependent log energy-supercritical Schr\"{o}dinger equation with radial data below ground state}
\author{Tristan Roy}
\address{American University of Beirut, Department of Mathematics}
\email{tr14@aub.edu.lb}

\begin{abstract}
Given $n \in \{ 3,4,5 \}$ and $k > 1$ (resp. $ \frac{4}{3} > k > 1$) if $n \in \{ 3,4 \}$ (resp. $n=5$), we prove scattering of the radial $\tilde{H}^{k} := \dot{H}^{k} (\mathbb{R}^{n}) \cap \dot{H}^{1} (\mathbb{R}^{n})-$ solutions of the focusing log energy-supercritical Schr\"odinger equation
$ i \partial_{t} u + \triangle u = -|u|^{\frac{4}{n-2}} u \log^{\gamma} ( 2  + |u|^{2})$ for a range of positive $\gamma \, s$ depending on the size of the initial data, for critical energies below the ground states', and for critical potential energies below. In order to control the barely supercritical nonlinearity in the virial identity and in the estimate of the growth of the critical energy for nonsmooth solutions, i.e solutions with data in $\tilde{H}^{k}$, $k \leq \frac{n}{2}$, we prove some Jensen-type inequalities, in the spirit of \cite{triroyjensen}.
%These  \cite{bourg, kenmer, roynak,triroyjensen, triroyrad,taorad,taofoc}$ ???.
%If the potential is larger than that of the ground states, we prove that, given a parameter $A >0$, one of the following holds for another range of $\gamma$ s: $(i)$ the solution exists globally in time and there exists a sequence going to infinity along which the potential norm is larger than $A$ $(ii)$ the solution blows-up in finite time.
\end{abstract}

\maketitle

\section{Introduction}

We shall study the radial solutions of the following focusing \footnote{It is well-known that the minus sign in the nonlinear term of
(\ref{Eqn:BarelySchrod}) makes the equation ``focusing''.} Schr\"odinger equation in dimension $n$, $n \in \{ 3,4,5 \}$: % \blc CH \bc

\begin{equation}
\begin{array}{ll}
i \partial_{t} u + \triangle u & =  -|u|^{\frac{4}{n-2}} u g(|u|) \cdot
\end{array}
\label{Eqn:BarelySchrod}
\end{equation}
Here $g(|u|):= \log^{\gamma} (2 + |u|^{2})$ and $\gamma > 0$ \footnote{
To simplify the exposition we have chosen in this paper to work with $g(|u|):= \log^{\gamma} (2 + |u|^{2})$. The reader can check that the main result  (i.e Theorem \ref{thm:main}) also holds for all functions $g$ of the form $g(|u|) := \log^{\gamma} ( b + |u|^{2}) $ with $b > 1$. See also Remark \ref{Rem:Famg}.}. Here $\log$ denotes the natural logarithm. \\
This equation has many connections with the following focusing power-type Schr\"odinger equation, $p>1$

\begin{equation}
\begin{array}{ll}
i \partial_{t} v + \triangle v & = -|v|^{p-1} v
\end{array}
\label{Eqn:Schrodpowerp}
\end{equation}
(\ref{Eqn:Schrodpowerp}) has a natural scaling: if $v$ is a solution of (\ref{Eqn:Schrodpowerp}) with data $v(0):=v_{0}$ and if $\lambda \in
\mathbb{R}$ is a parameter then $v_{\lambda}(t,x) := \frac{1}{\lambda^{\frac{2}{p-1}}} v \left( \frac{t}{\lambda^{2}}, \frac{x}{\lambda}
\right)$ is also a solution of (\ref{Eqn:Schrodpowerp}) but with data $v_{\lambda}(0,x):= \frac{1}{\lambda^{\frac{2}{p-1}}} v_{0} \left(
\frac{x}{\lambda} \right)$. If $s_{p}:= \frac{n}{2}- \frac{2}{p-1}$ then the $\dot{H}^{s_{p}}$ norm of the initial data is invariant under the
scaling: this is why (\ref{Eqn:Schrodpowerp}) is said to be $\dot{H}^{s_{p}}$- critical. If $p=1 + \frac{4}{n-2}$ then (\ref{Eqn:Schrodpowerp})
is $\dot{H}^{1}-$ (or energy$-$) critical. The focusing energy-critical Schr\"odinger equation
\begin{equation}
\begin{array}{ll}
i \partial_{t} u + \triangle u & = -|u|^{\frac{4}{n-2}} u
\end{array}
\label{Eqn:EnergyCrit}
\end{equation}
has received a great deal of attention. Cazenave and Weissler \cite{cazweiss} proved the local well-posedness of (\ref{Eqn:EnergyCrit}): given
any $u(0)$ such that $\| u(0) \|_{\dot{H}^{1}} < \infty$ there exists, for some $t_{0}$ close to zero, a unique $ u \in \mathcal{C} ( [0,t_{0}],
\dot{H}^{1} ) \cap L_{t}^{ \frac{2(n+2)}{n-2}} L_{x}^{\frac{2(n+2)}{n-2}} ( [0,t_{0}] )$ satisfying (\ref{Eqn:EnergyCrit}) in the sense of
distributions

\begin{equation}
\begin{array}{ll}
u(t) & = e^{it \triangle} u(0) + i \int_{0}^{t} e^{i(t-t^{'}) \triangle} \left[  |u(t')|^{\frac{4}{n-2}} u(t') \right] \, dt^{'} \cdot
\end{array}
\label{Eqn:DistribSchrod}
\end{equation}
The next step is to understand the asymptotic behavior of the radial solutions of (\ref{Eqn:EnergyCrit}). It is well-known that (\ref{Eqn:EnergyCrit}) has a family of stationary solutions (the ground states)  $W_{\lambda,\theta}(x) := e^{i \theta}\frac{1}{\lambda^{\frac{n-2}{2}}} W \left( \frac{x}{\lambda} \right)$
that satisfy

\begin{equation}
\begin{array}{l}
\triangle W_{\lambda,\theta} + |W_{\lambda,\theta}|^{\frac{4}{n-2}} W_{\lambda,\theta} =0
\end{array}
\label{Eqn:W}
\end{equation}
with $\theta \in [0, 2 \pi)$ and $W$ defined by $W(x)  := \frac{1}{\left(1 + \frac{|x|^{2}}{n(n-2)} \right)^{\frac{n-2}{2}}}$. \\
\\
The asymptotic behavior of the solutions for critical energies below the ground states' has been studied in \cite{kenmer}. In particular global existence and scattering (i.e the linear asymptotic behavior) were proved for critical potential energies below the ground states'  \footnote{Strictly speaking, the result in \cite{kenmer} is stated for critical energies (resp. kinetic energies) below the critical energy (resp. the critical kinetic energy) of the ground states. A simple argument shows that the conditions are the same if the word "kinetic" is replaced with "potential": see Appendix $A$. For a definition of the critical energy, the critical kinetic energy, and
the critical potential energy we refer to (\ref{Eqn:CriticalEnergy}).}. The asymptotic behavior of the solutions
was studied in \cite{duymerle} for energies equal to the ground states' and in \cite{roynak} for energies slightly larger than it.

If $p > 1+ \frac{4}{n-2}$ then $s_{p} > 1$ and we are in the energy supercritical regime. Since for all $\epsilon > 0$ there exists $c_{\epsilon} > 0$ such that $ \left| |u|^{\frac{4}{n-2}} u \right| \lesssim \left| |u|^{\frac{4}{n-2}} u g(|u|) \right| \leq c_{\epsilon} \max{(1, | |u|^{\frac{4}{n-2}+ \epsilon} u | ) }$ then the nonlinearity of (\ref{Eqn:BarelySchrod}) is said to be barely supercritical.

In this paper we study the asymptotic behavior of $\tilde{H}^{k}$-solutions of (\ref{Eqn:BarelySchrod}) for $n \in \{ 3,4,5 \}$. First we recall a %\blc CH \bc
local-wellposedness result \footnote{Proposition \ref{Prop:LocalWell} was proved in \cite{triroyrad} for $n \in \{3,4 \}$ and $k > \frac{n}{2}$. Nevertheless
it is stated slightly differently: ``$\mathcal{B} \left( L_{t}^{\frac{2(n+2)}{n-2}} L_{x}^{\frac{2(n+2)}{n-2}}([0,T_{l}]) ; 2 \epsilon \right)$ ''  is replaced with 
``$L_{t}^{\frac{2(n+2)}{n-2}} L_{x}^{\frac{2(n+2)}{n-2}}([0,T_{l}])$'' in (\ref{Eqn:Spaceu}). It is not difficult to see that Proposition \ref{Prop:LocalWell} implies
that (\ref{Eqn:DistribSchrodg}) also holds if we take into account this modification.}:

\begin{prop}{ \cite{triroyrad,triroyjensen}}
Let $n \in \{ 3,4,5 \}$ and let $ 1 < k < \infty$ if $n \in \{ 3,4 \}$ and $ 1 < k < \frac{4}{3}$ if $n=5$. Let $M$  be such that
$\| u_0 \|_{\tilde{H}^{k}} \leq M$. Then there exists $\epsilon := \epsilon(M) > 0$ such that the following holds: if  $T_{l}$ is a number
such that if $ T_{l} >  0$ satisfies %\footnote{If $n \in \{3,4 \}$ then the condition $T_{l} \ll 1$ can be removed} and

\begin{equation}
\begin{array}{ll}
\| e^{i t \triangle} u_{0}  \|_{L_{t}^{\frac{2(n+2)}{n-2}} L_{x}^{\frac{2(n+2)}{n-2}} ([0,T_{l}]) } & \leq \epsilon,
\end{array}
\label{Eqn:SmallLinProp}
\end{equation}
then there exists a unique

\begin{equation}
\begin{array}{l}
u \in \mathcal{C}([0,T_{l}], \tilde{H}^{k}) \cap L_{t}^{\frac{2(n+2)}{n}} D^{-1} L_{x}^{\frac{2(n+2)}{n}} ([0,T_{l}])
\cap L_{t}^{\frac{2(n+2)}{n}} D^{-k} L_{x}^{\frac{2(n+2)}{n}}([0,T_{l}]) \cap \mathcal{B} \left( L_{t}^{\frac{2(n+2)}{n-2}} L_{x}^{\frac{2(n+2)}{n-2}}([0,T_{l}]) ; 2 \epsilon \right)
\end{array}
\label{Eqn:Spaceu}
\end{equation}
such that

\begin{equation}
\begin{array}{l}
u(t)=e^{i t \triangle} u_{0} + i \int_{0}^{t} e^{i(t- t^{'}) \triangle} \left( |u(t^{'})|^{\frac{4}{n-2}} u(t^{'}) g(|u(t^{'})|) \right) \,
dt^{'}
\end{array}
\label{Eqn:DistribSchrodg}
\end{equation}
is satisfied in the sense of distributions. Here $D^{-\alpha} L^{r}$ is the space endowed with the
norm $\| f \|_{D^{-\alpha} L^{r}} := \| D^{\alpha} f \|_{L^{r}}$. Here $\mathcal{B}(X; r)$ denotes the (closed) ball centered at the origin with radius $r$ in the normed space $X$.
\label{Prop:LocalWell}
\end{prop}

\begin{rem}
A number $T_{l}$ that satisfies the smallness condition above is called a time of local existence.
\end{rem}

This allows by a standard procedure to define the notion of maximal time interval of existence $I_{max}:= (T_{-},T_{+})$, that is the union of all the open intervals $I$ containing $0$ such that there exist a (unique) solution $u \in \mathcal{C}( I, \tilde{H}^{k}) \cap  L_{t}^{\frac{2(n+2)}{n}} D^{-1} L_{x}^{\frac{2(n+2)}{n}} (I) \cap  L_{t}^{\frac{2(n+2)}{n}} D^{-k} L_{x}^{\frac{2(n+2)}{n}} (I) $ that satisfies (\ref{Eqn:DistribSchrodg}) for all $t \in I$.

\begin{rem}
In the sequel we denote by $\tilde{H}^{k}-$ solution of (\ref{Eqn:BarelySchrod}) a distribution constructed by the standard procedure that

\begin{itemize}
\item satisfies (\ref{Eqn:DistribSchrodg}) for some $ u_{0} \in \tilde{H}^{k} $ and for all $t \in I_{max}$
\item lies in $ \mathcal{C} ( I, \tilde{H}^{k} ) \cap  L_{t}^{\frac{2(n+2)}{n}} D^{-1} L_{x}^{\frac{2(n+2)}{n}} (I) \cap  L_{t}^{\frac{2(n+2)}{n}} D^{-k} L_{x}^{\frac{2(n+2)}{n}} (I) $ for all interval $I \subsetneq I_{max} $
\end{itemize}

\end{rem}

\begin{rem}
In the sequel we say that $u$ is an $\tilde{H}^{k}-$ solution of (\ref{Eqn:BarelySchrod}) on an interval $I$ if $u$ is an $\tilde{H}^{k}-$ solution of
 (\ref{Eqn:BarelySchrod}) and $I \subset I_{max}$.
\end{rem}
Then we recall the following proposition:

\begin{prop}{\cite{triroyrad,triroyjensen}}
If $|I_{max}|< \infty $ then

\begin{equation}
\begin{array}{ll}
\| u \|_{L_{t}^{\frac{2(n+2)}{n-2}} L_{x}^{\frac{2(n+2)}{n-2}} (I_{max})} & = \infty
\end{array}
\end{equation}
\label{Prop:GlobWellPosedCrit}
\end{prop}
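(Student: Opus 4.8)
This is the standard blow-up criterion: finite-time blow-up forces the Strichartz-type norm $L_{t,x}^{\frac{2(n+2)}{n-2}}$ to diverge. I would argue by contrapositive. Suppose $|I_{max}| < \infty$ but $\|u\|_{L_{t}^{\frac{2(n+2)}{n-2}} L_{x}^{\frac{2(n+2)}{n-2}}(I_{max})} =: A < \infty$. The goal is to show one can extend the solution past $T_{+}$ (say), contradicting maximality. The engine is the local theory of Proposition~\ref{Prop:LocalWell} together with the observation that the smallness hypothesis~(\ref{Eqn:SmallLinProp}) on the free evolution can always be met on a short enough interval once we control the scattering norm on the whole of $I_{max}$.

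**Key steps, in order.** First, partition $I_{max} = [0,T_{+})$ (treating the two sides symmetrically) into finitely many subintervals $J_{1}, \dots, J_{m}$ on each of which $\|u\|_{L_{t,x}^{\frac{2(n+2)}{n-2}}(J_{j})} \leq \eta$, where $\eta$ is a small constant to be chosen; this is possible since $A < \infty$, with $m = m(A,\eta)$. Second, run the Strichartz/product estimates from the local theory on each $J_{j}$ in succession: on $J_{j}$ one writes the Duhamel formula with initial data $u(t_{j-1})$, and using the barely-supercritical bound $\big| |u|^{\frac{4}{n-2}} u\, g(|u|) \big| \le c_{\epsilon}\max\!\big(1, |u|^{\frac{4}{n-2}+\epsilon}|u|\big)$ stated in the introduction, one closes a bootstrap controlling $\|u\|_{L_{t}^{\frac{2(n+2)}{n}} D^{-1}L_{x}^{\frac{2(n+2)}{n}}(J_{j})}$ and the $D^{-k}$ analogue in terms of $\|u(t_{j-1})\|_{\tilde H^{k}}$ and $\eta$. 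Iterating over $j=1,\dots,m$ gives a finite a priori bound $\sup_{t \in I_{max}} \|u(t)\|_{\tilde H^{k}} \leq C(A, \|u_{0}\|_{\tilde H^{k}})$ and finiteness of all the Strichartz norms of $u$ on $I_{max}$. Third, with $\|u(t)\|_{\tilde H^{k}}$ bounded up to $T_{+}$, the tail estimate $\|e^{it\triangle}u(T_{+}-\delta)\|_{L_{t,x}^{\frac{2(n+2)}{n-2}}([T_{+}-\delta, T_{+}-\delta + T_{l}])}$ can be made $\le \epsilon$ by taking $\delta$ small (using that $u(T_{+}-\delta) = e^{i(T_{+}-\delta)\triangle}u_{0} + (\text{Duhamel tail})$ and that both pieces have small scattering norm over a short interval, by monotone convergence / absolute continuity of the integral), so Proposition~\ref{Prop:LocalWell} applies at time $T_{+}-\delta$ and produces a solution on an interval of fixed length $T_{l}$ independent of $\delta$; choosing $\delta < T_{l}$ extends $u$ beyond $T_{+}$, contradicting the definition of $I_{max}$.

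**Main obstacle.** The delicate point is Step~2: making the iteration quantitative despite the logarithmic factor $g(|u|)$, which is \emph{not} scale-invariant and grows (slowly) with $|u|$. One must verify that the nonlinear estimates used in Proposition~\ref{Prop:LocalWell} — the fractional chain/Leibniz rule applied to $|u|^{\frac{4}{n-2}}u\,g(|u|)$ in the $D^{-1}$ and $D^{-k}$ norms — still close with constants that depend only on $\|u(t_{j-1})\|_{\tilde H^{k}}$ and the scattering norm on $J_{j}$, and that the number of subintervals $m$ stays finite. Concretely, one absorbs $g(|u|)$ using $g(|u|) \lesssim_{\epsilon} \max(1,|u|^{\epsilon})$ for any small $\epsilon>0$ and checks that the resulting exponents remain admissible for the Strichartz pairs available in the allowed range of $k$ (here the restriction $k < \tfrac{4}{3}$ when $n=5$ is exactly what keeps the chain rule usable); this is precisely the kind of bookkeeping carried out in \cite{triroyrad,triroyjensen}, so I would invoke those estimates rather than redo them. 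The rest of the argument (the covering, the summation, and the continuation at $T_{+}-\delta$) is then routine.
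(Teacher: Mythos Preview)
The paper does not give its own proof of Proposition~\ref{Prop:GlobWellPosedCrit}; it is simply quoted from \cite{triroyrad,triroyjensen}, with the remark immediately afterward that the argument written there for the loglog-supercritical defocusing equation carries over verbatim to (\ref{Eqn:BarelySchrod}). Your proposal is exactly the standard blow-up-criterion scheme used in those references (partition $I_{max}$ into finitely many pieces with small $L_{t,x}^{\frac{2(n+2)}{n-2}}$ norm, iterate the local Strichartz estimates to propagate a uniform $\tilde H^{k}$ bound, then extend past $T_{+}$), so there is nothing to correct at the level of strategy.

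One remark on your Step~2. You propose to handle the logarithm via the crude pointwise bound $g(|u|)\lesssim_{\epsilon}\max(1,|u|^{\epsilon})$ and then check that the perturbed exponents still fit into admissible Strichartz pairs. That works, but the cleaner device---and the one actually used throughout this paper, see Proposition~\ref{Prop:Nonlin}, (\ref{Eqn:NonlinEst}) and the model computation (\ref{Eqn:ModelStrichartz})---is the nonlinear estimate that pulls the logarithm out as the single scalar factor $g\big(Q(J,u)\big)$. On each subinterval $J_{j}$ this gives
\[
Q(J_{j},u)\ \lesssim\ \|u(t_{j-1})\|_{\tilde H^{k}}\ +\ \|u\|_{L_{t,x}^{\frac{2(n+2)}{n-2}}(J_{j})}^{\frac{4}{n-2}-}\,g\big(Q(J_{j},u)\big)\,Q(J_{j},u),
\]
and since $g$ is only logarithmic a continuity argument closes this directly to $Q(J_{j},u)\lesssim \|u(t_{j-1})\|_{\tilde H^{k}}$, with no exponent bookkeeping. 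After that, your Step~3 goes through; the tidiest version is to observe that the finiteness of all Strichartz norms on $I_{max}$ makes $t\mapsto e^{-it\triangle}u(t)$ Cauchy in $\tilde H^{k}$ as $t\uparrow T_{+}$ (same Duhamel computation as (\ref{Eqn:Diff})), so $u(T_{+})\in\tilde H^{k}$ exists and Proposition~\ref{Prop:LocalWell} applied at $T_{+}$ yields the contradiction.
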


\begin{rem}
Proposition \ref{Prop:LocalWell} and \ref{Prop:GlobWellPosedCrit} were proved in \cite{triroyrad,triroyjensen} for solutions of loglog supercritical defocusing equations, i.e solutions of $i \partial_{t} u + \triangle u = |u|^{\frac{4}{n-2}} u q(|u|)$ with
$q(|u|) :=\log^{\tilde{\gamma}} \log \left( 10 + |u|^{2} \right)$ and $\tilde{\gamma} > 0$. The same proof works for solutions of (\ref{Eqn:BarelySchrod}).
\label{Rem:Defoc}
\end{rem}
The above proposition may be viewed as a criterion to prove global existence for an $\tilde{H}^{k}-$ solution $u$ of (\ref{Eqn:BarelySchrod}) \ (i.e $I_{max} = (- \infty, \infty) $). Indeed, arguing by contradiction, if we can prove an \textit{a priori} bound of the form $ \| u \|_{L_{t}^{\frac{2(n+2)}{n-2}} L_{x}^{\frac{2(n+2)}{n-2}} ([-T,T]) } \leq f(T, \| u_{0} \|_{\tilde{H}^{k}}) $ for arbitrarily large time $T>0$ then $u$ exists for all time. In fact we shall prove that for some data the bound does not depend on time $T$, which will imply linear asymptotic behavior (i.e., scattering). \\
\\
We then define the critical energy, notion that appears in the statement of the main theorem. If $f \in \dot{H}^{1}$  then we define the critical energy $E_{cr}(f)$ \footnote{Here $1_{2}^{*}$ is the critical Sobolev exponent, i.e it satisfies
$\frac{1}{1_{2}^{*}} = \frac{1}{2} - \frac{1}{n}$. It is well-known that if $u$ is a solution of (\ref{Eqn:EnergyCrit}) with data in $\dot{H}^{1}$ then
$E_{cr}(u(t))$ is conserved in time, that is $E_{cr}(u(t)) = E_{cr}(u(0))$. Hence the terminology ``critical''.}

\begin{equation}
E_{cr}(f) := \frac{1}{2} \int_{\mathbb{R}^{n}} |\nabla f(x)|^{2} \; dx - \frac{1}{1_{2}^{*}} \int_{\mathbb{R}^{n}} |f(x)|^{1_{2}^{*}} \; dx \cdot
\label{Eqn:CriticalEnergy}
\end{equation}
The critical energy is made up of two terms: the critical kinetic energy  $ \left( \, \text{i.e} \, \frac{1}{2} \int_{\mathbb{R}^{n}} |\nabla f(x)|^{2} \; dx \right) $, and the critical potential energy $\left( \, \text{i.e} \,  - \frac{1}{1_{2}^{*}} \int_{\mathbb{R}^{n}} |f(x)|^{1_{2}^{*}} \; dx \right)$. \\
\vphantom{ab} \\
We now explain the main incentive for studying the asymptotic behavior of $\tilde{H}^{k}-$ solutions of (\ref{Eqn:BarelySchrod}). Defocusing barely supercritical
equations have been extensively in the literature: see e.g \cite{shihhsiwei,triroyrad,triroyjensen,triroysmooth,taolog}. In particular, global existence and
scattering of (some) solutions have been proved without any restriction regarding the size of the data by extending the theory for (some) solutions of the defocusing critical equations. In this paper we would like to know whether for radial data we can also extend the theory for (some) solutions of focusing critical Schrodinger equations to (some)
solutions of focusing barely supercritical Schrodinger equations. Here, unlike the defocusing equations, and by analogy with the theory developed in \cite{kenmer}
for (\ref{Eqn:EnergyCrit}), it is natural to consider data with size (in a sense to be defined) smaller than the ground states'
for the $\tilde{H}^{k}-$ solutions of (\ref{Eqn:BarelySchrod}) to scatter, since the ground states are global-in-time but non-scattering solutions of (\ref{Eqn:EnergyCrit}), an equation that is closely related to (\ref{Eqn:BarelySchrod}). More precisely the main result of this paper is a global existence and scattering result for a barely supercritical nonlinearity, namely a focusing size-dependent \footnote{Observe that $\gamma$ satisfies the condition (\ref{Eqn:CdtRes}). This condition depends on the size of the data. Hence the terminology ``size-dependent''.} log energy-supercritical Schr\"odinger equation for critical energies below the ground states' and for critical potential energies below:

\begin{thm}
Let $n \in \{3,4,5 \}$. \\
Let $I_n$ defined as follows: if $n \in \{3,4\}$ then $I_n := (1,\infty)$ and if
$n=5$ then  $I_n := \left( 1, \frac{4}{3} \right)$. Let $k \in I_{n}$. Let $\delta > 0$. Let $ u_{0} \in \tilde{H}^{k} $ be such that

\begin{equation}
E_{cr}(u_0) < (1- \delta) E_{cr}(W), \; \text{and} \;\| u_{0} \|_{L^{1_{2}^{*}}} < \| W \|_{L^{1_{2}^{*}}} \cdot
\label{Eqn:Ass0}
\end{equation}
Then there exists a large constant $ \bar{C} > 0 $ such that if $\gamma > 0$ satisfies the condition $(Cd)$ below

\begin{equation}
(Cd): \left\{
\begin{array}{ll}
k-1 \leq 1: & (k - 1)^{-\gamma} \log^{\gamma} \left( \bar{C}^{\bar{C}^{\bar{C}^{\delta^{-\frac{1}{2}}}}}  \| u_{0} \|_{\tilde{H}^{k}} \right)
-1 \leq  \bar{C}^{-1} \delta \\
k - 1 \geq 1: &  \log^{\gamma} \left( \bar{C}^{\bar{C}^{\bar{C}^{\delta^{-\frac{1}{2}}}}}  \| u_{0} \|_{\tilde{H}^{k}} \right) - 1  \leq  \bar{C}^{-1} \delta,
\end{array}
\right.
\label{Eqn:CdtRes}
\end{equation}
then $u$, the $\tilde{H}^{k}-$ solution of (\ref{Eqn:BarelySchrod}) with radial data $ u_{0} \in \tilde{H}^{k}$, exists for all time. Moreover
there exists a scattering state $u_{0,\pm} \in \tilde{H}^{k} $ such that

\begin{equation}
\begin{array}{ll}
\lim \limits_{t \rightarrow \pm \infty} \| u(t) - e^{it \triangle} u_{0,\pm}  \|_{\tilde{H}^{k}} & = 0 \cdot
\end{array}
\label{Eqn:Scatt}
\end{equation}

\label{thm:main}
\end{thm}

%\begin{rem}
%The proof of Theorem \ref{thm:main} also shows that
%$ \| u \|_{L_{t}^{\frac{2(n+2)}{n-2}} L_{x}^{\frac{2(n+2)}{n-2}}(\mathbb{R}^{+})} \leq \bar{C}^{\bar{C}^{\delta^{-\frac{1}{2}}}} $.
%\end{rem}

\vphantom{ab} \\

\begin{rem}
If we only assume $ \| u_{0} \|_{\tilde{H}^{k}} \ll 1 $ \footnote{ \label{footsymbol}
For a definition of the $\ll$ symbol and the $\gtrsim$ symbol we refer to
Section \ref{Sec:Prelim}.} then the same conclusion holds for all $\gamma > 0$: this is a consequence of the local theory:
see Appendix $B$.
\end{rem}

Consequently we make the following assumption: \\
\\
\underline{Assumption}: we will assume from now on that $ \| u_{0} \|_{\tilde{H}^{k}} \gtrsim 1 $ \footnoteref{footsymbol}.

\begin{rem}
The assumptions imply that $\delta < \frac{n}{2} $ \footnote{Indeed this follows from (\ref{Eqn:Ass0}) and (\ref{Eqn:ValKW}).}.
Consequently we see from (\ref{Eqn:CdtRes}) that $\gamma$ is small. (\ref{Eqn:CdtRes}) says how small $\gamma$ \footnote{The smallness depends on the value of
$\| u_{0} \|_{\tilde{H}^{k}}$ and on the value of $\delta$. } should be for the solution $u$ of (\ref{Eqn:BarelySchrod}) to scatter in the regime where $k$ is close to $1$ ( i.e $k-1 \leq 1$) and in the regime where $k$ is far from $1$ ( i.e $k-1 \geq 1$).
\label{Rem:SmallGamma}
\end{rem}

\begin{rem}
Observe that if we only assume that $E_{cr}(u_{0}) < (1 - \delta) E_{cr}(W) $ and that  $\gamma > 0$ satisfies (\ref{Eqn:CdtRes}) then $ \| u_{0} \|_{L^{1_{2}^{*}}} = \| W \|_{L^{1_{2}^{*}}} $ is impossible. \\
\vphantom{ab} \\
Indeed recall  see (e.g \cite{aubin,talenti}) the sharp Sobolev inequality

\begin{equation}
\| f \|_{L^{1_{2}^{*}}} \leq C_{*} \| \nabla f \|_{L^{2}}, \; \text{with} \;  C_{*} := \frac{\| W \|_{L^{1_{2}^{*}}}}{\| \nabla W \|_{L^{2}}} \cdot
\nonumber
\end{equation}
Hence $F(y) < \left( 1 -  \frac{\delta}{2} \right) F(\| W \|_{L^{1_{2}^{*}}}) $ with $y:= \| u_{0} \|_{L^{1_{2}^{*}}}$ and
$ F(y) := \frac{1}{2 C_{*}^{2}} y^{2} - \frac{1}{1_{2}^{*}} y^{1_{2}^{*}} $. \\
\vphantom{ab} \\
So it remains to understand the asymptotic behavior of  $\tilde{H}^{k}-$ solutions of (\ref{Eqn:BarelySchrod}) if $u_{0} \in \tilde{H}^{k}$ satisfies $E_{cr}(u_{0}) < (1 -  \delta) E_{cr} (W)$, $\| u_{0} \|_{L^{1_{2}^{*}}} > \| W \|_{L^{1_{2}^{*}}} $, and if $\gamma > 0$ satisfies (\ref{Eqn:CdtRes}) with $\bar{C} > 0$ a constant that is large enough. We will not pursue this matter in this paper.
\label{Rem:Imp}
\end{rem}

\begin{rem}
In this paper we have chosen to work with $g(|u|):= \log^{\gamma} (2 + |u|^{2})$ to simplify the exposition. The reader can check that a straightforward modification of the arguments in this paper shows that for all $b > 1 $ Theorem \ref{thm:main} also holds for $g(|u|) := \log^{\gamma} ( b + |u|^{2}) $ with $\bar{C}:= \bar{C}(b)$.
\label{Rem:Famg}
\end{rem}

Now we explain the main ideas of this paper and how it is organized. In Section \ref{Sec:Thmmain} we prove the main result of this paper, i.e Theorem \ref{thm:main}. The
proof relies upon the following estimate of $\| u \|_{L_{t}^{\frac{2(n+2)}{n-2}} L_{x}^{\frac{2(n+2)}{n-2}} }$ on an arbitrarily long-time interval $J$, assuming that
an \textit{a priori} bound of some norms at $\tilde{H}^{k}-$ regularity and a smallness condition hold
\footnote{For a definition of $Q(J,u)$ and $\tilde{h}$ we refer to Section \ref{Sec:Prelim} and to Section \ref{Sec:ProofProp}.}:

\begin{prop}
Let $u$ be a radial $\tilde{H}^{k}-$ solution of (\ref{Eqn:BarelySchrod}). Let
$J:=[0,a]$ be an interval. There exist $ C_{0} \gg 1$ and $M_{0} \gg 1 $ such that if

\begin{equation}
\begin{array}{l}
M \geq M_{0}: \; Q(J,u) \leq M, \; \text{and} \; \tilde{h} \left(C_{0}  M \right) \ll \delta,
\end{array}
\label{Eqn:Constg}
\end{equation}
then

\begin{equation}
\begin{array}{ll}
\| u \|^{\frac{2(n+2)}{n-2}}_{ L_{t}^{\frac{2(n+2)}{n-2}} L_{x}^{\frac{2(n+2)}{n-2}} (J)} & \leq
C_{0}^{C_{0}^{ \delta^{-\frac{1}{2}}}} \cdot
\end{array}
\label{Eqn:BoundLong}
\end{equation}
\label{Prop:BoundLong}
\end{prop}
This bound proved on an arbitrary time interval $J$, combined with a local induction in time of some Strichartz estimates and the smallness of $\gamma$ (see
(\ref{Eqn:CdtRes})), allows to show \textit{a posteriori} that the condition (\ref{Eqn:Constg}) holds on $J$: see \cite{triroyrad, triroysmooth} for a similar argument. Global well-posedness and scattering of $\tilde{H}^{k}$-solutions of (\ref{Eqn:BarelySchrod}) follow easily from the finiteness of these bounds. In Section \ref{Sec:ProofProp}, we prove Proposition \ref{Prop:BoundLong}. We mention the main differences between this paper
and the papers \cite{triroyrad,triroyjensen}. First one has to assume the smallness condition in (\ref{Eqn:Constg}). This condition combined with the energy conservation law and the properties of the ground states assure that some relevant norms (such as the kinetic energy and the potential energy) are bounded on
$J$, so that we can apply the techniques of concentration (see, e.g., \cite{bourg,taorad}) in order to prove
(\ref{Eqn:BoundLong}). Roughly speaking, we divide $J$ into subintervals $(J_{l})_{ 1 \leq l \leq L}$ such that the $ L_{t}^{\frac{2(n+2)}{n-2}} L_{x}^{\frac{2(n+2)}{n-2}} $ norm of $u$ concentrates, i.e, it is small but also substantial. Our goal is to estimate the number of these subintervals. It is already known that the mass on a ball centered at the origin concentrates for all time on each of these subintervals. In \cite{triroyrad}, a Morawetz-type estimate (combined with the mass concentration) was used to prove that the following statement holds: one of these subintervals is large compare with $J$. In this paper we prove a decay at some time of the potential energy on a ball
centered at the origin by using the virial identity, which leads to a contradiction unless the statement above holds. When we use the
virial identity, one has to control some error terms. One also has to control the growth of the critical energy. In order to achieve these goals and close the argument
at the final stage of the proof (see Section \ref{Sec:Thmmain}) for nonsmooth solutions, we prove some Jensen-type inequalities (in the spirit of \cite{triroyjensen}) with respect to well-chosen measures ( and within a contradiction argument, if necessary) and we adapt arguments in \cite{kenmer,roynak,taofoc} to prove the decay. Once the statement is proved one can show that there exists a significant number of subintervals (in comparison with the total number of subintervals) that concentrate around some time and such that the mass concentrates around the origin, which yields an estimate of the number of all the subintervals. The process involves several estimates. One has to understand how they depend on $\delta$ since this will play an important role in the choice of $\gamma$ (see (\ref{Eqn:CdtRes})) for which we have global well-posedness and scattering of radial $ \tilde{H}^{k} $-solutions of (\ref{Eqn:BarelySchrod}). \\
\\
\textbf{Funding}: This article was partially funded by an URB grant (ID: $3245$) from the American University of Beirut. This research was also partially conducted in Japan and
funded by a (JSPS) Kakenhi grant  [ 15K17570 to T.R.].

\section{Preliminaries}
\label{Sec:Prelim}

\subsection{Notation}

We recall some notation. If $a \in \mathbb{R}$ then $\langle a \rangle := \left( 1+ a^{2} \right)^{\frac{1}{2}}$. We write $a \ll b$ if the value of $a$ is much smaller that that of $b$, $a \gg b$ if the value of $a$ is much larger than that of $b$, and $a \approx b $ if  $ a \ll b $ and $b \ll a$ are not true. We write
$a = o(b)$ if there exists a constant $0 < c \ll 1$ such that  $|a| \leq c |b|$. We define
$b+ = b + \epsilon$ for $ 0 < \epsilon \ll 1$. If $b+$ appears in a mathematical expression such as
$a \leq C b+$, then we ignore the dependance of $C$ on $\epsilon$ in order to make our presentation simple.
Unless otherwise specified, we let in the sequel $f$ (resp. $u$) be a function depending on the space variable (resp.
the space variable and the time variable). Unless otherwise specified, for sake of simplicity, we do not mention the spaces to which $f$ and $u$
belong in the estimates: this exercise is left to the reader. \\
\\
Let $r > 1$ and let $m$ be a positive number such that $m < \frac{n}{r}$. We denote by $m_{r}^{*}$ the number that satisfies
$\frac{1}{m_{r}^{*}} = \frac{1}{r} - \frac{m}{n}$. \\
\\
Let $p \geq 1$. Let $J$ be an interval such that $J \subset \mathbb{R}$ . We define the number

\begin{equation}
\begin{array}{l}
Q_{p}(J,u) :=  \| u \|_{L_{t}^{\infty} \tilde{H}^{p}(J)} + \| D u \|_{L_{t}^{\frac{2(n+2)}{n}} L_{x}^{\frac{2(n+2)}{n}} (J)} + \| D^{p} u
\|_{L_{t}^{\frac{2(n+2)}{n}} L_{x}^{\frac{2(n+2)}{n}} (J) } \cdot
\end{array}
\nonumber
\end{equation}
If $p=k$ then we omit the `$k$' for sake of simplicity and we write $Q(J,u)$ for $Q_{k}(J,u)$.

\subsection{Standard estimates}

In this subsection we write down some standard estimates. \\
\\
We recall (a generalized form of) the Jensen inequality that we will often use throughout this paper:

\begin{prop}(Jensen inequality, see e.g \cite{yeh})
Let $(X, \mathcal{B}, \mu)$ be a measure space such that $0 < \mu(X) < \infty$. Let $I$ be an open interval and let
$g$ be a $\mu-$ integrable function on a set $D \in \mathcal{B}$ such that $g(D) \subset I$. If $f$ is a concave function on
$I$ then the following holds

\begin{equation}
\begin{array}{ll}
\frac{1}{\mu(D)} \int_{D} f \circ  g \; d \mu &  \leq f \left( \frac{ \int_{D} g \; d \mu }{\mu(D)} \right)
\end{array}
\nonumber
\end{equation}

\end{prop}
The statement of this inequality is made in \cite{yeh} for convex functions. The statement of the inequality below follows immediately from that in \cite{yeh} , taking into account that if $f$ is concave then $-f$ is convex.  \\
\\
We then recall some Strichartz-type estimates. Let $J$ be an interval. Let $t_0 \in J$. If $u$ is a solution of $i \partial_{t} u + \triangle u = G$ on $J$ then the Strichartz estimates (see e.g \cite{keeltao}) yield

\begin{equation}
\begin{array}{l}
\| u \|_{L_{t}^{\infty} \dot{H}^{j} (J) } + \| D^{j} u \|_{L_{t}^{\frac{2(n+2)}{n}} L_{x}^{\frac{2(n+2)}{n}} (J)} + \| D^{j} u
\|_{L_{t}^{\frac{2(n+2)}{n-2}} L_{x}^{\frac{2 n(n+2)}{n^{2}+4}} (J)} \\
\lesssim \| D^{j} G \|_{L_{t}^{\frac{2(n+2)}{n+4}} L_{x}^{\frac{2(n+2)}{n+4}} ( J ) } + \| u(t_0) \|_{\dot{H}^{j}}
\end{array}
\label{Eqn:Strich}
\end{equation}
if $j \geq 1$. We write

\begin{equation}
\begin{array}{l}
u(t) = u_{l,t_0}(t)+ u_{nl,t_0}(t)
\end{array}
\nonumber
\end{equation}
with $u_{l,t_0}$ denoting the linear part starting from $t_0$, i.e

\begin{equation}
\begin{array}{l}
u_{l,t_0}(t) := e^{i(t-t_0) \triangle} u(t_0),
\end{array}
\nonumber
\end{equation}
and $u_{nl,t_0}$ denoting the nonlinear part starting from $t_0$, i.e

\begin{equation}
\begin{array}{l}
u_{nl,t_0}(t) :=  - i \int_{t_0}^{t} e^{i(t-s) \triangle} G(s) \; ds \cdot
\end{array}
\nonumber
\end{equation}

We then recall some properties for $u$, an $\tilde{H}^{k}-$ solution of (\ref{Eqn:BarelySchrod}). \\
Let $E_{\gamma}(f)$ denotes the energy of a function $f \in \tilde{H}^{k}$, that is

\begin{equation}
\begin{array}{ll}
E_{\gamma}(f)  & := \frac{1}{2} \int_{\mathbb{R}^{n}} |\nabla f(x)|^{2} \, dx -  \int_{\mathbb{R}^{n}} P(|f|)(x) \, dx,
\end{array}
\label{Eqn:EnergyBarely}
\end{equation}
with  $P(|z|) := \int_{0}^{|z|} s^{1_2^{*}-1 } g(s) \, ds$.  Observe that the energy is finite \footnote{Indeed, taking into account $g(|f|) \lesssim 1 + |f|^{k_{2}^{*}-1_{2}^{*}}$, the embeddings $\tilde{H}^{k} \hookrightarrow L^{1_{2}^{*}}$ and $\tilde{H}^{k} \hookrightarrow L^{k_{2}^{*}}$, we get $\left| \int_{\mathbb{R}^{n}} P(|f|)(x) \, dx \right| \lesssim  \| f \|^{1_{2}^{*}}_{\tilde{H}^{k}} + \| f \|^{k_{2}^{*}}_{\tilde{H}^{k}} < \infty \cdot $}. Then it is well-known that $u$ satisfies the conservation of the energy, that is $E_{\gamma}(u(t)) = E_{\gamma} (u(0)) $ for all $t \in I_{max}$ \footnote{More precisely, if $n \in \{ 3,4 \}$ then a computation shows that $E_{\gamma}(u(t))= E_{\gamma}(u(0))$ for smooth solutions (i.e, solutions in $\tilde{H}^{p}$ with exponents $p$ large enough). Then $E_{\gamma}(u(t)) = E_{\gamma}(u(0))$  holds for an $\tilde{H}^{k}-$ solution by a standard approximation with smooth solutions. If $n=5$ then the nonlinearity is not even smooth. So one should first smooth out the nonlinearity and obtain an identity similar to the energy identity for smooth solutions (i.e, solutions lying in Sobolev spaces with large exponent)
of the ``smoothed'' equation and then take limit in $\tilde{H}^{k}$ by a standard approximation with smooth solutions. \label{Foot:EnergyRem}}.\\
Recall (see, e.g, \cite{taorad}) that

\begin{equation}
\begin{array}{ll}
Mass \left( u(t),B(x_{0},R) \right) & \lesssim  R \, \| \nabla u(t) \|_{L^{2}},
\end{array}
\label{Eqn:MassControl}
\end{equation}
with $ Mass \left( u(t), B(x_{0},R) \right)$ denoting the mass of $u(t)$ within the ball $B(x_{0},R)$ centered at $x_{0}$ and with radius
$R  > 0$, i.e

\begin{equation}
\begin{array}{ll}
Mass \left( u(t), B(x_{0},R) \right) & := \left( \int_{\mathbb{R}^{n}} \chi \left( \frac{|x-x_{0}|}{R} \right) |u(t,x)|^{2} \; dx \right)^{\frac{1}{2}} \cdot
\end{array}
\nonumber
\end{equation}
Here $\chi$ is a smooth function such that $\chi(x)= 1 $ if $|x| \leq 1$ and $\chi(x)=0$ if $|x| \geq 2$. Recall that the derivative satisfies (see also, e.g., \cite{taorad})

\begin{equation}
\begin{array}{ll}
\left| \partial_{t} Mass(u(t),B(x_{0},R)) \right| & \lesssim \frac{ \|\nabla u(t) \|_{L^{2}}}{R}
\end{array}
\label{Eqn:UpBdDerivM}
\end{equation}

\subsection{Leibnitz-type rules}

We now turn our attention to Leibnitz-type rules. Leibnitz-type rules will be used to control the nonlinearity on small intervals $J$. \\
We recall the following proposition:

\begin{prop}{\cite{triroyjensen}}
Let $2 \geq k^{'} > 1 $. Let $j \in \{ 1,k' \}$. Let $J$ be an interval such that $\| u \|_{L_{t}^{\frac{2(n+2)}{n-2}} L_{x}^{\frac{2(n+2)}{n-2}}(J)} \leq P \lesssim 1$. Then

\begin{equation}
\begin{array}{ll}
\left\| D^{j} \left( |u|^{\frac{4}{n-2}} u g(|u|) \right) \right\|_{L_{t}^{\frac{2(n+2)}{n+4}}  L_{x}^{\frac{2(n+2)}{n+4}} (J)}
& \lesssim \| D^{j} u \|_{L_{t}^{\frac{2(n+2)}{n}} L_{x}^{\frac{2(n+2)}{n}} (J)} P^{\frac{4}{n-2} -} g(Q_{k^{'}}(J,u))
\end{array}
\label{Eqn:EstDj}
\end{equation}
\label{Prop:Nonlin}
\end{prop}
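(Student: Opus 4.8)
\emph{Proof sketch.} The plan is to peel off the purely power-type part $|u|^{\frac4{n-2}}u$ of the nonlinearity --- which is handled by the classical energy-critical nonlinear estimate --- from the slowly varying logarithmic weight $g(|u|)$, paying a negligible power of $P$ for the decoupling; this is the meaning of the exponent $\frac4{n-2}-$. The starting point is the fractional Leibniz and chain rules applied to $D^{k'}\big(|u|^{\frac4{n-2}}u\,g(|u|)\big)$. Writing $F(z):=|z|^{\frac4{n-2}}z\,g(|z|)$, one has $|F'(z)|\lesssim|z|^{\frac4{n-2}}g(|z|)$ and $|F''(z)|\lesssim|z|^{\frac4{n-2}-1}g(|z|)$, because $g$ varies slowly: $|s\,g'(s)|\lesssim g(s)$, as one sees on differentiating $\log^{\gamma}(2+s^{2})$. (For $n=5$ the map $F'$ is only H\"older continuous of exponent $\tfrac13$, which is precisely why $k'$ must be $<\tfrac43$ in that dimension.) In this way the left-hand side is bounded by finitely many terms of the form $\big\||u|^{\frac4{n-2}-a}g(|u|)\,D^{j_{1}}u\cdots D^{j_{m}}u\big\|_{L_{t}^{\frac{2(n+2)}{n+4}}L_{x}^{\frac{2(n+2)}{n+4}}(J)}$ with $j_{1}+\cdots+j_{m}=k'$, the leading one being $\big\||u|^{\frac4{n-2}}g(|u|)\,D^{k'}u\big\|$; in each term the intermediate-order derivatives are distributed by interpolating $\dot H^{j}$ between $\dot H^{1}$ and $\dot H^{k}$ --- both sitting inside $Q(J,u)$ --- together with Strichartz-admissible Lebesgue exponents, so it suffices to treat the leading term.

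For the leading term, H\"older in space-time isolates the factor $\|D^{k'}u\|_{L_{t}^{\frac{2(n+2)}{n}}L_{x}^{\frac{2(n+2)}{n}}(J)}\le Q(J,u)$ and leaves $\big\||u|^{\frac4{n-2}}g(|u|)\big\|_{L_{t}^{\frac{n+2}{2}}L_{x}^{\frac{n+2}{2}}(J)}$. Since $\|u\|_{L_{t}^{\frac{2(n+2)}{n-2}}L_{x}^{\frac{2(n+2)}{n-2}}(J)}\le P$, the whole matter reduces, after also absorbing a tiny power of $P$, to proving
\[
\Big(\int_{J}\!\int_{\mathbb R^{n}}|u(t,x)|^{\frac{2(n+2)}{n-2}}g(|u(t,x)|)^{\frac{n+2}{2}}\,dx\,dt\Big)^{\frac{2}{n+2}}\ \lesssim\ P^{\frac4{n-2}-}\,g\big(Q(J,u)\big).
\]
If $k>\tfrac n2$ this is immediate: by (\ref{Eqn:SobolevIneq2}) one has $|u(t,x)|\le\|u(t)\|_{L^{\infty}}\lesssim\|u(t)\|_{\tilde H^{k}}\le Q(J,u)$, so $g(|u|)\lesssim g(Q(J,u))$ pointwise (using $g(Cs)\lesssim g(s)$), and what remains is the classical estimate $\big\|D^{k'}(|u|^{\frac4{n-2}}u)\big\|\lesssim\|D^{k'}u\|\,\|u\|_{L_{t}^{\frac{2(n+2)}{n-2}}L_{x}^{\frac{2(n+2)}{n-2}}}^{\frac4{n-2}}\le\|D^{k'}u\|\,P^{\frac4{n-2}-}$.

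The genuine difficulty is the range $k\le\tfrac n2$ --- the ``unbounded'' solutions of the abstract --- where no pointwise bound on $|u(t,x)|$ is at hand; this is where the Jensen-type inequalities of \cite{triroyjensen} enter, in the spirit of the treatment of $X_{2}(u_{0})$ in Remark \ref{Rem:BoundNrj}. At a fixed time $t$ one writes $|u(t,x)|^{\frac{2(n+2)}{n-2}}g(|u(t,x)|)^{\frac{n+2}{2}}$ as a constant multiple of $\int_{0}^{|u(t,x)|}s^{\frac{2(n+2)}{n-2}-1}g(s)^{\frac{n+2}{2}}\,ds$ and applies Jensen twice: first in $s$, against the probability measure proportional to $s^{\frac{2(n+2)}{n-2}-1}\mathbf 1_{[0,|u(t,x)|]}\,ds$ and a concave majorant of $g^{\frac{n+2}{2}}-1$ (the analogue of $\breve h$ for this power), producing that majorant evaluated at a normalized ratio $\approx|u(t,x)|^{2\breve\epsilon}$; and then in $x$, against the probability measure proportional to $|u(t,x)|^{\frac{2(n+2)}{n-2}}\,dx$, to carry $\int dx$ inside. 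The outcome is a bound of the type
\[
\int_{\mathbb R^{n}}|u(t)|^{\frac{2(n+2)}{n-2}}g(|u(t)|)^{\frac{n+2}{2}}\,dx\ \lesssim\ g\!\left(\frac{\int_{\mathbb R^{n}}|u(t)|^{\frac{2(n+2)}{n-2}+2\breve\epsilon}\,dx}{\int_{\mathbb R^{n}}|u(t)|^{\frac{2(n+2)}{n-2}}\,dx}\right)^{\frac{n+2}{2}}\int_{\mathbb R^{n}}|u(t)|^{\frac{2(n+2)}{n-2}}\,dx ,
\]
whose point is that $g$ is now evaluated on a scale-invariant ratio, which --- by the H\"older interpolation (\ref{Eqn:Hold}) with the exponent $\theta$, by (\ref{Eqn:SobolevIneq2}), and by the smallness of $\breve c$ --- is bounded by $\|u(t)\|_{\tilde H^{k}}^{O(\breve c)}$ times a negative power $O(\breve c)$ of the fixed Lebesgue norm $\|u(t,\cdot)\|_{L^{\frac{2(n+2)}{n-2}}_{x}}$. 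One then splits $g$ of this ratio using $g(ab)\lesssim g(a)g(b)$ and $g(a^{m})\lesssim_{m}g(a)$, integrates in $t$, and distinguishes the times at which that Lebesgue norm is, or is not, below a fixed negative power of $Q(J,u)$: on the former set the bound is direct; on the latter $u(t)$ is necessarily tall and concentrated, which forces the space-time norms $\|Du\|_{L_{t}^{\frac{2(n+2)}{n}}L_{x}^{\frac{2(n+2)}{n}}(J)}$ and $\|D^{k}u\|_{L_{t}^{\frac{2(n+2)}{n}}L_{x}^{\frac{2(n+2)}{n}}(J)}$ inside $Q(J,u)$ to be large, so that the spurious inverse powers are reabsorbed into a fixed power of $g(Q(J,u))$ while the surplus is swallowed by $\int_{J}\|u(t,\cdot)\|_{L^{\frac{2(n+2)}{n-2}}_{x}}^{\frac{2(n+2)}{n-2}}\,dt\le P^{\frac{2(n+2)}{n-2}}$ and a comparison of logarithms with powers. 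Extracting roots then turns the fixed power of $g(Q(J,u))$ back into $g(Q(J,u))$ (after enlarging $g$ by a harmless power) and the last integral into the desired small power of $P$. The main obstacle is precisely this last step: making the Jensen reduction quantitative and checking that the normalized ratios, together with the residual inverse powers of $\|u(t,\cdot)\|_{L^{\frac{2(n+2)}{n-2}}_{x}}$, are controlled by $g(Q(J,u))$ and by $P\lesssim1$ \emph{uniformly in the length of $J$}.
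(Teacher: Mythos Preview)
The paper does not prove this proposition: it is quoted from \cite{triroyjensen} and used as a black box, so there is no in-paper argument to compare against directly. Your sketch is in the right spirit --- the Jensen reduction you describe is exactly the mechanism the paper deploys elsewhere (Remark \ref{Rem:BoundNrj}, the Claims inside the proof of Result \ref{res:DistribInterv}) to trade a pointwise $g(|u|)$ for $g$ of a normalized ratio of Lebesgue norms --- and the split into $k>\frac n2$ (pointwise) versus $k\le\frac n2$ (Jensen) is the correct dichotomy.

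There is, however, a concrete gap in the unbounded case. You carry out the Jensen step at each fixed time with base exponent $p:=\tfrac{2(n+2)}{n-2}$ and then invoke (\ref{Eqn:Hold}) to control the ratio $\int|u(t)|^{p+2\breve\epsilon}\,dx\big/\int|u(t)|^{p}\,dx$ by interpolating between $L^{1_2^*}$ and $L^{\bar k_2^*}$. But (\ref{Eqn:Hold}) is an interpolation at base exponent $1_2^*$, not $p$; for it to reach $L^{p+2\breve\epsilon}$ one needs $\bar k_2^*>p$, i.e.\ $\bar k>\tfrac{2n}{n+2}$. The paper's constraint $\bar k<\min\big(\tfrac{n+2}{4},k\big)$ with $\bar k-1\approx k-1$ when $k-1\ll1$ forces $\bar k_2^*<p$ for all admissible $k$ when $n=5$ (since $k<\tfrac43<\tfrac{10}{7}$), and likewise for $k$ close to $1$ when $n\in\{3,4\}$. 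In those regimes the only fixed-time Sobolev norms available from $\|u(t)\|_{\tilde H^k}$ are $L^{1_2^*}$ and $L^{\bar k_2^*}$, both strictly below $p$, so your fixed-time ratio cannot be bounded as claimed and the subsequent dichotomy on $\|u(t,\cdot)\|_{L^p_x}$ never gets off the ground.

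The repair is to run Jensen against the full space--time measure on $J\times\mathbb R^n$ rather than time-by-time, so that the numerator of the normalized ratio becomes a space--time integral and can be interpolated between $\|u\|_{L_t^{p}L_x^{p}(J)}\le P$ and the higher space--time norms obtained from $\|D^{j}u\|_{L_t^{\frac{2(n+2)}{n}}L_x^{\frac{2(n+2)}{n}}(J)}\le Q(J,u)$ (via Sobolev in $x$) together with $\|u\|_{L_t^\infty L_x^{\bar k_2^*}}$; this is what makes all of $Q(J,u)$, not just its $L_t^\infty\tilde H^k$ piece, available. Your last paragraph gestures at $Q(J,u)$'s space--time components but never actually uses them, and the hand-off ``distinguishes the times at which that Lebesgue norm is, or is not, below a fixed negative power of $Q(J,u)$'' is not a substitute: it presupposes the very fixed-time bound that fails.
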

This proposition combined with (\ref{Eqn:Strich}) will allow to control for $ 1 < k^{'} \leq 2 $ quantities $Q_{k'}(J,u)$
that contain norms at $\dot{H}^{k'}-$ regularity  ( such as $ \| D^{k'} u \|_{L_{t}^{\frac{2(n+2)}{n}} L_{x}^{\frac{2(n+2)}{n}} (J) } $ )
and norms at $\dot{H}^{1}-$ regularity  ( such as  $ \| \nabla u \|_{L_{t}^{\frac{2(n+2)}{n}} L_{x}^{\frac{2(n+2)}{n}} (J) } $ ). \\
The following proposition holds ( see \cite{triroyrad} and Remark \ref{Rem:NonlinFracSmooth} ):

\begin{prop}
Let $ 0  \leq \alpha < 1$, $k'$ and $\beta$ be integers such that $k' \geq 2$ and $\beta \geq k'-1 $, $(r ,r_{2}) \in (1,\infty)^{2}$,
$(r_{1},r_{3}) \in (1, \infty]^{2}$ be such that $\frac{1}{r}= \frac{\beta}{r_{1}} + \frac{1}{r_{2}} +\frac{1}{r_{3}}$. Let $F: \mathbb{R}^{+} \rightarrow \mathbb{R}$
be a $\mathcal{C}^{k'}-$ function, let $\tilde{F} : \mathbb{R}^{+} \rightarrow \mathbb{R} $ be a nondecreasing
$\mathcal{C}^{0}-$ function, and let $ G := \mathbb{R}^{2} \rightarrow \mathbb{R}^{2} $ be a $ \mathcal{C}^{k^{'}-1} -$  function that lies also in $ \mathcal{C}^{k^{'}} \left(  \mathbb{R}^{2} - \{ (0,0) \} \right)$. Assume that

\begin{equation}
\begin{array}{l}
(a): \;
\left\{
\begin{array}{l}
\tau \in [0,1]: \; \left| F \left( |\tau x + (1-\tau)y|^{2} \right) \right|
\lesssim  \left| \tilde{F}(|x|^{2}) \right| + \left| \tilde{F}( |y|^{2}) \right|,   \\
\tilde{F}(x) \gtrsim \tilde{F}(0), \; \text{and} \\
i \in \{0,...,k^{'} \}:  \; F^{[i]}(x) = O \left( \frac{\tilde{F}(x)}{x^{i}} \right)
\end{array}
\right.
\\
(b): \; i \in \{0,...,k^{'} \}: \;  G^{[i]}(x,\bar{x}) = O \left( |x|^{\beta + 1  - i} \right) \cdot
\end{array}
\nonumber
\end{equation}
Then

\begin{equation}
\begin{array}{ll}
\left\| D^{ k' -1 + \alpha} \left( G(f,\bar{f}) F(|f|^{2}) \right) \right\|_{L^{r}} & \lesssim \| f \|^{\beta}_{L^{r_{1}}} \| D^{k' -1  + \alpha} f \|_{L^{r_{2}}}
\| \tilde{F}(|f|^{2}) \|_{L^{r_{3}}}
\end{array}
\label{Eqn:EstToProveFrac}
\end{equation}
Here $F^{[i]}$ and $G^{[i]}$ denotes the $i^{th}-$ derivative of $F$ and $G$ respectively.

\label{Prop:NonlinFracSmooth}
\end{prop}

\begin{rem}
In Appendix $C$, we explain why Proposition \ref{Prop:NonlinFracSmooth} for $ \beta > k' - 1 $ was proved in the proof of
Proposition $7$ in \cite{triroyrad}. %\footnote{We also use this opportunity to point out in Appendix $D$ a technical error that appears in \cite{triroyrad} and we explain how to fix this issue by using Proposition \ref{Prop:NonlinFracSmooth}}.
In Appendix $D$, we prove Proposition \ref{Prop:NonlinFracSmooth} for $ \beta = k' - 1 $.
\label{Rem:NonlinFracSmooth}
\end{rem}

Observe that if $ n \in \{ 3,4 \} $ , $ G(z,\bar{z}) := |z|^{\frac{4}{n-2}} z $, $F(x):= \log^{\gamma} (2 + x)$, and
$\tilde{F}(x):= F(x)$, then $G$, $F$, and $\tilde{F}$ satisfy  the assumptions of Proposition \ref{Prop:NonlinFracSmooth}  with $ \beta := \frac{4}{n-2}$
and $k' \in \mathbb{N}$ such that $ \frac{n+2}{n-2} \geq k' \geq 2 $. Hence by combining this proposition with (\ref{Eqn:Strich}) and (Sobolev) embeddings we will be able to control for this range of $k's$ quantities such as $Q_{k'}(J,u)$ that contain norms at $\dot{H}^{k'-1 + \alpha}-$ regularity. \\
Hence, by applying Proposition \ref{Prop:Nonlin} and Proposition \ref{Prop:NonlinFracSmooth} to $J$ interval such that
$\| u \|_{L_{t}^{\frac{2(n+2)}{n-2}} L_{x}^{\frac{2(n+2)}{n-2}} (J)} \lesssim 1$, by the embedding
$ \tilde{H}^{\frac{n}{2}+} \hookrightarrow L^{\infty}$, and by the estimate $k' > \frac{n}{2}: \; Q_{\frac{n}{2}+} (J,u) \lesssim  Q_{k'}(J,u)$
that follows from interpolation, we get for $  k^{'} \in  \left( 1 , \frac{n+2}{n-2} \right) \cap I_{n} $ and for $j \in \{ 1,k' \}$
\footnote{Divide into three cases: $ \frac{n+2}{n-2} > k^{'} > \frac{n}{2} $ if $ n \in \{ 3,4 \} $, $  \frac{n}{2}  \geq  k^{'} > 1 $ if $n \in \{3,4 \}$, and
$ \frac{4}{3} > k^{'} > 1 $ if $n= 5$.}

\begin{equation}
\begin{array}{ll}
\left\| D^{j} \left( |u|^{\frac{4}{n-2}} u g(|u|) \right) \right\|_{L_{t}^{\frac{2(n+2)}{n+4}} L_{x}^{\frac{2(n+2)}{n+4}} (J)} & \lesssim
\| D^{j} u \|_{L_{t}^{\frac{2(n+2)}{n}} L_{x}^{\frac{2(n+2)}{n}} (J) }
\| u \|^{\frac{4}{n-2}-}_{L_{t}^{\frac{2(n+2)}{n-2}} L_{x}^{\frac{2(n+2)}{n-2}} (J) }
g \left( Q_{k^{'}}(J,u) \right) \cdot
\end{array}
\label{Eqn:NonlinEst}
\end{equation}
This estimate will be used in Section \ref{Sec:Thmmain} and in Section \ref{Sec:ProofProp}. We also recall the following proposition that only holds for $n \in \{ 3, 4 \}$:

\begin{prop}{\cite{triroyrad}}
Let $(\lambda_1,\lambda_2) \in \mathbb{N}^{2}$ be such that $\lambda_1 + \lambda_2 = \frac{n+2}{n-2}$. Let $J$ be an interval. Let $k^{'} > \frac{n}{2}$. Then there exists
a constant $C  > 0 $ such that

\begin{equation}
\begin{array}{ll}
\left\| D^{k^{'}}(u^{\lambda_1} \bar{u}^{\lambda_2} g(|u|) ) \right\|_{L_{t}^{\frac{2(n+2)}{n+4}} L_{x}^{\frac{2(n+2)}{n+4}}(J)} & \lesssim
\| u \|^{\frac{4}{n-2}}_{L_{t}^{\frac{2(n+2)}{n-2}} L_{x}^{\frac{2(n+2)}{n-2}}(J)} \langle  Q_{k^{'}}(J,u) \rangle^{C}
\end{array}
\label{Eqn:EstHighReg}
\end{equation}
\label{Prop:EstHighReg}
\end{prop}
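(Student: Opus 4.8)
The plan is to reduce to a fixed-time estimate in $L_x^{\frac{2(n+2)}{n+4}}$ and then close with H\"older's inequality in time. Write $N(u):= u^{\lambda_1}\bar u^{\lambda_2} g(|u|) = G(u,\bar u)\,\breve{g}(|u|^{2})$ with $G(z,\bar z):= z^{\lambda_1}\bar z^{\lambda_2}$, a polynomial of degree $\lambda_1+\lambda_2 = \frac{n+2}{n-2}$, and $\breve{g}(x)=\log^{\gamma}(2+x)$. Because $n\in\{3,4\}$, both $\lambda_1+\lambda_2$ and $\beta:=\frac{4}{n-2}=\lambda_1+\lambda_2-1$ are integers and $N$ is a $C^{\infty}$ function of $(u,\bar u)$ (the only potentially singular factor, $\log^{\gamma}(2+|u|^{2})$, stays bounded below by $(\log 2)^{\gamma}>0$). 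One first records the structural bounds $G^{[i]}(z,\bar z)=O\!\left(|z|^{\frac{n+2}{n-2}-i}\right)$, $\breve{g}^{(m)}(x)=O\!\left(\breve{g}(x)/x^{m}\right)$ for all $m$ (each differentiation of $\log^{\gamma}(2+x)$ producing a factor $\lesssim(2+x)^{-1}$ times a lower power of the logarithm), and $\breve{g}(|\tau z+(1-\tau)w|^{2})\lesssim\breve{g}(|z|^{2})+\breve{g}(|w|^{2})$ for $\tau\in[0,1]$ (since $\breve{g}\ge0$ is nondecreasing and $|\tau z+(1-\tau)w|\le\max(|z|,|w|)$): these are exactly the hypotheses (\ref{Eqn:Cdtionf})--(\ref{Eqn:CdtionG}) of Proposition \ref{Prop:NonlinFracSmooth}.

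Next I would feed this into Proposition \ref{Prop:NonlinFracSmooth} (and, in the $(n,k)$ ranges where its integrality constraints $k'\ge2$, $\beta>k'-1$ are not met, into an ad hoc Leibniz/chain-rule expansion of $D^{k}N(u)$ built along the same lines) with $k'-1+\alpha=k$, $0\le\alpha<1$, and exponents $r_{1}=\frac{2(n+2)}{n-2}$, $r_{2}=\frac{2(n+2)}{n}$, $r_{3}=\infty$, $r=\frac{2(n+2)}{n+4}$, for which $\frac{1}{r}=\frac{\beta}{r_{1}}+\frac{1}{r_{2}}+\frac{1}{r_{3}}$. The leading term (all $k$ derivatives on one copy of $u$) obeys, at fixed time,
\begin{equation}
\big\|D^{k}N(u)\big\|_{L_{x}^{\frac{2(n+2)}{n+4}}}\lesssim \|u\|_{L_{x}^{\frac{2(n+2)}{n-2}}}^{\frac{4}{n-2}}\;\|D^{k}u\|_{L_{x}^{\frac{2(n+2)}{n}}}\;\big\|g(|u|)\big\|_{L_{x}^{\infty}}.
\nonumber
\end{equation}
Since $k>\frac{n}{2}$, (\ref{Eqn:SobolevIneq2}) gives $\|g(|u|)\|_{L_{x}^{\infty}}=g(\|u\|_{L_{x}^{\infty}})\lesssim g(\|u\|_{\tilde H^{k}})\lesssim\big\langle\|u\|_{\tilde H^{k}}\big\rangle^{\bar C}$; taking the $L_{t}^{\frac{2(n+2)}{n+4}}(J)$ norm and distributing with H\"older exponents $\frac{2(n+2)}{n-2}$ on each of the $\frac{4}{n-2}$ copies of $u$, $\frac{2(n+2)}{n}$ on the $D^{k}u$ factor, and $\infty$ on the $g$ factor (whose reciprocals again sum to $\frac{n+4}{2(n+2)}$), and using $\|D^{k}u\|_{L_{t}^{\frac{2(n+2)}{n}}L_{x}^{\frac{2(n+2)}{n}}(J)}+\|u\|_{L_{t}^{\infty}\tilde H^{k}(J)}\le Q(J,u)$, yields
\begin{equation}
\big\|D^{k}N(u)\big\|_{L_{t}^{\frac{2(n+2)}{n+4}}L_{x}^{\frac{2(n+2)}{n+4}}(J)}\lesssim \|u\|_{L_{t}^{\frac{2(n+2)}{n-2}}L_{x}^{\frac{2(n+2)}{n-2}}(J)}^{\frac{4}{n-2}}\;\big\langle Q(J,u)\big\rangle^{\bar C}
\nonumber
\end{equation}
after relabeling $\bar C$.

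It remains — and this is the main point — to absorb the lower-order terms of the expansion, i.e.\ those in which the $k$ derivatives are split among two or more of the $\lambda_1+\lambda_2$ polynomial factors and the $g(|u|)$ factor. A factor carrying $a$ derivatives with $1\le a<k$ I would handle by Gagliardo--Nirenberg interpolation between $\|u\|_{L_{x}^{\infty}}\lesssim\|u\|_{\tilde H^{k}}$ (legitimate since $k>\frac{n}{2}$) and $\|D^{k}u\|$ at the relevant Strichartz exponent appearing in $Q(J,u)$, turning each such factor into $\langle Q(J,u)\rangle^{O(1)}$; derivatives landing on $g(|u|)$ are expanded by the chain rule, the weights $g^{(m)}(|u|)=O\!\left(g(|u|)/|u|^{m}\right)$ being absorbed against the $m$ factors $D^{c_{j}}|u|$ they multiply, and $\|g(|u|)\|_{L_{x}^{\infty}}$ bounded as above. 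The delicate book-keeping, which I expect to be the real obstacle, is checking that \emph{every} term of the expansion still carries the full factor $\|u\|_{L_{t}^{\frac{2(n+2)}{n-2}}L_{x}^{\frac{2(n+2)}{n-2}}(J)}^{\frac{4}{n-2}}$: this holds because there are $\frac{4}{n-2}+1$ polynomial factors and at most one of them can be spent on carrying a high-order derivative, so the other $\frac{4}{n-2}$ may always be assigned the critical space-time exponent, the surplus powers of derivatives and of $g$ being swept into $\langle Q(J,u)\rangle^{\bar C}$. Since the Leibniz expansion has only finitely many terms, summing them finishes the proof.
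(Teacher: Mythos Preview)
The paper does not supply a proof of this proposition; it is simply quoted from \cite{triroyrad}. Your overall framework --- invoke Proposition~\ref{Prop:NonlinFracSmooth} where its hypotheses hold, fall back on a direct Leibniz/chain-rule expansion elsewhere, place $g(|u|)$ in $L^{\infty}$ via $\tilde H^{k}\hookrightarrow L^{\infty}$ (available precisely because $k>n/2$), and distribute H\"older exponents so that $\frac{4}{n-2}$ copies of $u$ sit in the critical norm --- is the natural strategy and is consistent with how the paper derives (\ref{Eqn:NonlinEst}) from Proposition~\ref{Prop:NonlinFracSmooth}.

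There is, however, a genuine gap in your handling of the cross terms. Your assertion that ``at most one of them can be spent on carrying a high-order derivative, so the other $\frac{4}{n-2}$ may always be assigned the critical space-time exponent'' is not correct as stated: in the full Leibniz expansion the $k$ derivatives may spread over \emph{all} polynomial factors simultaneously (for $n=4$, $k=3$ one meets $\partial u\cdot\partial u\cdot\partial u\cdot g(|u|)$ with no undifferentiated copy of $u$ at all), and your proposed Gagliardo--Nirenberg interpolation between $\|u\|_{L^{\infty}}$ and $\|D^{k}u\|$ then converts each differentiated factor into $\langle Q\rangle^{O(1)}$ while \emph{losing} the required prefactor $\|u\|_{L_{t,x}^{2(n+2)/(n-2)}}^{4/(n-2)}$. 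The repair is to change one interpolation endpoint: interpolate each factor $D^{a_{i}}u$ between $\|u\|_{L^{2(n+2)/(n-2)}}$ and $\|D^{k}u\|_{L^{2(n+2)/n}}$ with weight $\theta_{i}=a_{i}/k$; since $\sum_{i} a_{i}\le k$, the exponent of $\|u\|_{L^{2(n+2)/(n-2)}}$ recovered is $\sum_{i}(1-\theta_{i})\ge m-1=\frac{4}{n-2}$, and the residual powers aggregate into $\langle Q\rangle^{\bar C}$. Alternatively one may bypass the full expansion entirely and iterate the bilinear Kato--Ponce product estimate, which automatically produces only ``endpoint'' terms with a single $D^{k}$ factor and all other factors undifferentiated --- this is in effect what Proposition~\ref{Prop:NonlinFracSmooth} already packages when its constraint $\beta>k'-1$ is met.
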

This proposition will allow to control for $  k^{'} > \frac{n}{2} $ norms at
$\dot{H}^{k'}-$ regularity of the nonlinearity such as
$ \left\| D^{k^{'}} \left( |u|^{\frac{4}{n-2}} u g(|u|) \right) \right\|_{L_{t}^{\frac{2(n+2)}{n+4}} L_{x}^{\frac{2(n+2)}{n+4}}(J)}$: see Section \ref{Sec:Thmmain}.

\section{Proof of Theorem \ref{thm:main}}   % \blc CH \bc
\label{Sec:Thmmain}
The proof is made of three steps:

\begin{itemize}

\item given $k \in I_{n} \cap \left( 1, \frac{n+2}{n-2} \right)$, finite bound for all $T \geq 0$ of $Q([-T,T],u)$: this implies a finite
bound of $\| u \|_{L_{t}^{\frac{2(n+2)}{n-2}} L_{x}^{\frac{2(n+2)}{n-2}} ([-T,T])}$ in view of the embedding
$ \| f \|_{L^{\frac{2(n+2)}{n-2}}} \lesssim \| D f \|_{L^{\frac{2n(n+2)}{n^{2}+4}}} $ and
the interpolation inequality below that holds for $\theta$ such that $\frac{n-2}{2(n+2)} = \frac{n(1- \theta) }{2(n+2)} $:

\begin{equation}
\begin{array}{l}
\| Du \|_{L_{t}^{\frac{2(n+2)}{n-2}} L_{x}^{\frac{2n(n+2)}{n^{2}+4}} ([-T,T]) } \lesssim \| D u \|^{\theta}_{L_{t}^{\infty} L_{x}^{2} ([-T,T])} \| D u \|^{1- \theta}_{L_{t}^{\frac{2(n+2)}{n}} L_{x}^{\frac{2(n+2)}{n}} ([-T,T]) }.
\end{array}
\nonumber
\end{equation}
This implies global existence by Proposition \ref{Prop:GlobWellPosedCrit} and the paragraph below Remark \ref{Rem:Defoc}. In fact we shall prove that the bound does not depend on $T$, which will imply scattering: see third step. By symmetry \footnote{i.e if $t \rightarrow u(t,x)$ is a solution of (\ref{Eqn:BarelySchrod}) then
$ t \rightarrow \bar{u}(-t,x) $ is a solution of (\ref{Eqn:BarelySchrod}).} it is sufficient to restrict our analysis to the interval $[0,T]$. \\
Let $ C_{1} \gg 1 $  be a constant large enough such that the statements below are true. Let $ M_k  := C_{1}^{C_{1}^{C_{1}^{\delta^{-\frac{1}{2}}}}} \| u_0 \|_{\tilde{H}^{k}}$. We claim that for all $ T > 0 $ we have $Q([0,T],u) \leq M_k$. If not there exists $\bar{T} > 0 $ such that $Q([0,\bar{T}],u) = M_k$ and for
all $t \in [0,\bar{T}]$, we have $Q([0,t]) \leq M_k$. The properties of $k_{\diamond}$ (see Section \ref{Sec:ProofProp}) and Remark
\ref{Rem:SmallGamma} imply that the second estimate of (\ref{Eqn:Constg}) and $g(M_{k}) \lesssim 1$ hold. Applying Proposition \ref{Prop:BoundLong} we see that

\begin{equation}
\begin{array}{ll}
\| u \|^{\frac{2(n+2)}{n-2}}_{L_{t}^{\frac{2(n+2)}{n-2}} L_{x}^{\frac{2(n+2)}{n-2}} ([0,\bar{T}])} \leq C_{0}^{C_{0}^{\delta^{- \frac{1}{2}}}}
\end{array}
\label{Eqn:StrichBound}
\end{equation}
Let $J:=[0,a]$ be an interval such that $ \| u \|_{L_{t}^{\frac{2(n+2)}{n-2}} L_{x}^{\frac{2(n+2)}{n-2}} (J)} \lesssim 1$. Let
$0 < \epsilon \ll 1$. From (\ref{Eqn:Strich}) and (\ref{Eqn:NonlinEst}), we see that
there is a constant $C \gtrsim 1$ such that

\begin{equation}
\begin{array}{ll}
Q(J,u) & \lesssim \| u_{0} \|_{\tilde{H}^{k}} + \left(  \| D u \|_{L_{t}^{\frac{2(n+2)}{n}} L_{x}^{\frac{2(n+2)}{n}} (J) } + \| D^{k} u
\|_{L_{t}^{\frac{2(n+2)}{n}} L_{x}^{\frac{2(n+2)}{n}} (J) } \right) \| u \|^{\frac{4}{n-2}-}_{L_{t}^{\frac{2(n+2)}{n-2}} L_{x}^{\frac{2(n+2)}{n-2}} (J) }
g \left(  Q(J,u)\right)
\\
& \leq C \| u_{0} \|_{\tilde{H}^{k}} + C  Q(J,u) \| u \| ^{\frac{4}{n-2}-}_{L_{t}^{\frac{2(n+2)}{n-2}} L_{x}^{\frac{2(n+2)}{n-2}} (J)} \cdot
\end{array}
\label{Eqn:ModelStrichartz}
\end{equation}
Let $0 < \epsilon  \ll 1$. Notice that if $J$ satisfies $ \| u \|_{L_{t}^{\frac{2(n+2)}{n-2}} L_{x}^{\frac{2(n+2)}{n-2}} (J)} \leq \epsilon  $, then a simple continuity argument shows that $Q(J,u) \leq 2 C \| u_{0} \|_{\tilde{H}^{k}}$. \\
In view of (\ref{Eqn:StrichBound}) we can divide $[0, \bar{T}]$ into subintervals
$(J_{i})_{1 \leq i \leq I}$ such that $ \| u \|_{L_{t}^{\frac{2(n+2)}{n-2}} L_{x}^{\frac{2(n+2)}{n-2}}
(J_{i})} = \epsilon $, $ 1 \leq i < I$ and
$ \| u \|_{L_{t}^{\frac{2(n+2)}{n-2}} L_{x}^{\frac{2(n+2)}{n-2} } (J_{I})} \leq \epsilon $,  with

\begin{equation}
\begin{array}{ll}
I & \lesssim C_{0}^{C_{0}^{\delta^{- \frac{1}{2}}}}  \cdot
\end{array}
\nonumber
\end{equation}
We get by iteration (increasing the value of $C_0$ if necessary)

\begin{equation}
\begin{array}{l}
Q \left( [0,\bar{T}],u \right)  \lesssim (2C)^{I} \| u_0 \|_{\tilde{H}^{k}}  \leq  C_{0}^{C_{0}^{C_{0}^{\delta^{- \frac{1}{2}}}}  }  \| u_0 \|_{\tilde{H}^{k}}
\leq \frac{M_k}{2} \cdot
\end{array}
\nonumber
\end{equation}
This  is a contradiction.

\item Finite bound of $Q(\mathbb{R},u)$ for $k > 1$ (resp. $ \frac{4}{3} > k > 1$) if $n \in \{3,4 \}$ (resp. $n=5$): this follows from the following proposition

\begin{prop}{\cite{triroyrad}}
Let $u$ be a solution of (\ref{Eqn:BarelySchrod}) with data
$u_0 \in \tilde{H}^{k^{'}}$, $k^{'} > \frac{n}{2}$. Assume that $u$ exists globally in time and that
$\| u \|_{L_{t}^{\frac{2(n+2)}{n-2}} L_{x}^{\frac{2(n+2)}{n-2}} (\mathbb{R})} < \infty$. Then
$Q_{k^{'}}(\mathbb{R},u) < \infty$.
\label{Prop:PersReg}
\end{prop}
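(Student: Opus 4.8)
The plan is to prove it by the standard persistence-of-regularity scheme: chop $\mathbb{R}$ into finitely many intervals on which $u$ has small critical Strichartz norm, propagate an a~priori bound on $Q(\cdot,u)$ across each one by a continuity argument built on the Strichartz estimate (\ref{Eqn:Strich}) and the nonlinear estimates recalled above, and then iterate over the finitely many pieces. Write $A:=\|u\|_{L_{t}^{\frac{2(n+2)}{n-2}} L_{x}^{\frac{2(n+2)}{n-2}}(\mathbb{R})}<\infty$. First I would fix a small parameter $\eta>0$, to be chosen at the very end, and use $A<\infty$ to split $\mathbb{R}=\bigcup_{\ell=1}^{L}J_{\ell}$ into consecutive intervals with $\|u\|_{L_{t}^{\frac{2(n+2)}{n-2}} L_{x}^{\frac{2(n+2)}{n-2}}(J_{\ell})}\le\eta$ and $L\le 1+(A/\eta)^{\frac{2(n+2)}{n-2}}$; I would order the $J_{\ell}$ so that $0$ lies in one of them and process them outward from $0$, treating (by the time-reversal symmetry) only the pieces lying to the right of $0$. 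Since $u$ exists globally in the $\tilde{H}^{k}$-class, $Q([a,T],u)<\infty$ on every compact $[a,T]$, which is what lets the continuity argument below get started.

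The core step is propagation across one interval $J=[a,b]$ on which $M':=\|u(a)\|_{\tilde{H}^{k}}<\infty$. Here $\Phi(T):=Q([a,T],u)$ is continuous and non-decreasing for $T\in[a,b]$ with $\Phi(a)=M'$. Writing $u=u_{l,a}+u_{nl,a}$, applying (\ref{Eqn:Strich}) with $j\in\{1,k\}$ on $[a,T]$, and estimating $D^{j}$ of the nonlinearity $|u|^{\frac{4}{n-2}}ug(|u|)$ by (\ref{Eqn:NonlinEst}) when $1\le k<\frac{n+2}{n-2}$ --- respectively by Proposition \ref{Prop:EstHighReg} when $k\ge\frac{n+2}{n-2}$, where for $n\in\{3,4\}$ the factor $|u|^{\frac{4}{n-2}}u$ is the polynomial $u^{\lambda_{1}}\bar{u}^{\lambda_{2}}$ with $\lambda_{1}+\lambda_{2}=\frac{n+2}{n-2}$ --- and using $\|u\|_{L_{t}^{\frac{2(n+2)}{n-2}} L_{x}^{\frac{2(n+2)}{n-2}}(J)}\le\eta$, one obtains
\begin{equation}
\Phi(T)\ \le\ C_{1}M'+C_{1}\,\eta^{\frac{4}{n-2}-}\,\Phi(T)\,g\big(\Phi(T)\big)
\nonumber
\end{equation}
in the first case, and the analogue with $\langle\Phi(T)\rangle^{\bar{C}}$ in place of $\Phi(T)\,g(\Phi(T))$ in the second, $C_{1}$ denoting a Strichartz constant. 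If $\eta$ is chosen so small (in terms of $M'$ and $\gamma$) that $2C_{1}\eta^{\frac{4}{n-2}-}g(2C_{1}M')<1$ --- respectively $C_{1}\eta^{\frac{4}{n-2}}\langle 2C_{1}M'\rangle^{\bar{C}}<C_{1}M'$ --- then a continuity (bootstrap) argument gives $Q(J,u)=\Phi(b)\le 2C_{1}M'$, and in particular $\|u(b)\|_{\tilde{H}^{k}}\le 2C_{1}M'<\infty$.

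Iterating this across the $L$ intervals shows that every endpoint value of $\|u(\cdot)\|_{\tilde{H}^{k}}$ is at most $(2C_{1})^{L}\|u_{0}\|_{\tilde{H}^{k}}$, whence $Q(\mathbb{R},u)\le\sum_{\ell=1}^{L}Q(J_{\ell},u)\lesssim L\,(2C_{1})^{L}\|u_{0}\|_{\tilde{H}^{k}}<\infty$, which is the assertion. The delicate point --- and the one I expect to be the main obstacle --- is that the smallness of $\eta$ demanded on the last interval depends on $(2C_{1})^{L-1}\|u_{0}\|_{\tilde{H}^{k}}$, while $L$ itself depends on $\eta$, so one must check that this loop actually closes. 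In the range $1\le k<\frac{n+2}{n-2}$ it does, precisely because $g$ grows only like a power of a logarithm: $g\big((2C_{1})^{L}\|u_{0}\|_{\tilde{H}^{k}}\big)\lesssim_{\|u_{0}\|_{\tilde{H}^{k}},\gamma}L^{\gamma}\lesssim(A/\eta)^{\gamma\frac{2(n+2)}{n-2}}$, so the requirement reduces to $\eta^{\frac{4}{n-2}-}(A/\eta)^{\gamma\frac{2(n+2)}{n-2}}\ll 1$, which holds for $\eta$ small in the relevant range of $\gamma$ and pins down a legitimate $\eta$. In the high-regularity range $k\ge\frac{n+2}{n-2}$ only the cruder bound $\langle Q\rangle^{\bar{C}}$ of Proposition \ref{Prop:EstHighReg} is available, and closing the loop needs the more careful analysis of \cite{triroyrad}: one first propagates the norms of order $<\frac{n+2}{n-2}$ to get a finite a~priori control on that part of $Q$, and then feeds it into a separate continuity argument for the top-order piece $\|D^{k}u\|$.
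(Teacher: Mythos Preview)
The paper does not itself prove this proposition; it is quoted from \cite{triroyrad} (where the nonlinearity is the loglog $\tilde g(s)=\log^{\tilde\gamma}\log(10+s^{2})$, cf.\ the remark following Proposition~\ref{Prop:GlobWellPosedCrit}). Your scheme --- partition $\mathbb{R}$ into finitely many intervals of small critical Strichartz norm, run a Strichartz-plus-nonlinear-estimate bootstrap on each via (\ref{Eqn:Strich}) together with (\ref{Eqn:NonlinEst}) or Proposition~\ref{Prop:EstHighReg}, and iterate --- is exactly the standard persistence-of-regularity argument and is what \cite{triroyrad} does, so in outline you match the source.

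Your concern about the loop closing is well placed and the arithmetic is correct: for the present $g(s)=\log^{\gamma}(2+s^{2})$ the naive iteration in the range $k<\frac{n+2}{n-2}$ closes only when $\gamma\frac{2(n+2)}{n-2}<\frac{4}{n-2}$, i.e.\ $\gamma<\frac{2}{n+2}$, whereas in \cite{triroyrad} the loglog growth gives $\tilde g\big((2C_{1})^{L}\|u_{0}\|\big)\approx(\log L)^{\tilde\gamma}$ and the loop closes for every $\tilde\gamma>0$. In this paper the restriction is harmless since (\ref{Eqn:Ass1})--(\ref{Eqn:Ass2}) force $\gamma$ tiny. For $k\ge\frac{n+2}{n-2}$ your two-stage fix is exactly what is available in the paper's actual application (Section~\ref{Sec:Thmmain}, second bullet): the first bullet has already bounded $Q_{k'}(\mathbb{R},u)$ for some $k'\in\big(\tfrac{n}{2},\tfrac{n+2}{n-2}\big)$, hence $\|u\|_{L^{\infty}_{t,x}}\le M_{\infty}<\infty$ by (\ref{Eqn:SobolevIneq2}); tracing the derivation of (\ref{Eqn:NonlinEst}) from Proposition~\ref{Prop:NonlinFracSmooth} with $r_{3}=\infty$, the factor $g(Q(J,u))$ may then be replaced by the fixed constant $g(M_{\infty})$, after which the per-interval bootstrap for $\|D^{k}u\|$ becomes genuinely linear in $\Phi$, closes with a single $\eta$ independent of $\ell$, and the iteration goes through cleanly.
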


\item Scattering: this part of the proof is contained in \cite{triroyrad,triroyjensen}. For sake of completeness we write here the
full details. It is enough to prove that $e^{- i t \triangle} u(t)$ has a limit as $t \rightarrow \infty$ in $\tilde{H}^{k}$. Let
$ 1 \gg \epsilon > 0$. There exists $A(\epsilon)$ large enough such that if $t_2 \geq t_1 \geq A(\epsilon)$ then
$\| u \|_{L_{t}^{\frac{2(n+2)}{n-2}} L_{x}^{\frac{2(n+2)}{n-2}} ([t_1,t_2])} \ll \epsilon$. We get from Plancherel theorem,
(\ref{Eqn:Strich}), (\ref{Eqn:NonlinEst}), and
Proposition \ref{Prop:EstHighReg}
% \color{red}
% COMMENTS
%By dualizing  (\ref{Eqn:Strich}) with $G=0$
%(more precisely the estimate $\| D^{j} u \|_{L_{t}^{\frac{2(n+2)}{n}} L_{x}^{\frac{2(n+2)}{n}}([t_1,t_2])} \lesssim \| u_0  \|_{\tilde{H}^{j}}$
%if $j \in \{1,k \}$), by (\ref{Eqn:NonlinEst}) and Proposition \ref{Prop:EstHighReg} we get
%END COMMENTS
%\color{black}

\begin{equation}
\begin{array}{l}
\| e^{-i t_{1} \triangle} u(t_{1}) - e^{- i t_{2} \triangle} u(t_{2}) \|_{\tilde{H}^{k}} \\
=  \| e^{i(t_{2} - t_{1}) \triangle} u(t_{1}) - u(t_{2}) \|_{\tilde{H^{k}}} \\
\lesssim \left\| D \left( |u|^{\frac{4}{n-2}} u g(|u|) \right) \right\|_{L_{t}^{\frac{2(n+2)}{n+4}} L_{x}^{\frac{2(n+2)}{n+4}} ([t_1,t_2])}
+ \left\| D^{k} \left( |u|^{\frac{4}{n-2}} u g(|u|) \right) \right\|_{L_{t}^{\frac{2(n+2)}{n+4}} L_{x}^{\frac{2(n+2)}{n+4}} ([t_1,t_2])} \\
 \lesssim \| u \|^{\frac{4}{n-2}-}_{L_{t}^{\frac{2(n+2)}{n-2}} L_{x}^{\frac{2(n+2)}{n-2}} ([t_{1},t_{2}]) }  \\
\ll \epsilon \cdot
\end{array}
\label{Eqn:Diff}
\end{equation}
The Cauchy criterion is satisfied. Hence scattering.

\end{itemize}

\section{Proof of Proposition \ref{Prop:BoundLong}}
\label{Sec:ProofProp}

In this section we prove Proposition \ref{Prop:BoundLong}.  \\

\subsection{Objects: definition} \label{Subsec:Def}

In this subsection we define some objects that we use throughout the proof. \\
\vphantom{ab} \\
We first define some numbers and some functions that we will use whereever we use the Jensen inequality. \\
\\
Let $c_{\diamond}$ (resp. $C_{\diamond}$) be a fixed positive constant that is small (resp. large) enough so that all the statements in this section (and more generally in this paper) where $c_{\diamond}$ (resp. $C_{\diamond}$) appears are true. Let \footnote{The number $|k_{\diamond}|_{2}^{*} $ is such that
$\frac{1}{|k_{\diamond}|_{2}^{*}} = \frac{1}{2} - \frac{|k_{\diamond}|}{n} = \frac{1}{2} - \frac{k_{\diamond}}{n}$. Here we consider
$|k_{\diamond}|_{2}^{*} $ instead of $k_{\diamond_{2}}^{*}$  since the notation $k_{\diamond_{2}}^{*}$ might be confusing.}

\begin{equation}
\epsilon_{\diamond} := c_{\diamond} \Big( |k_{\diamond}|_{2}^{*} - 1_{2}^{*} \Big),
\nonumber
\end{equation}
with $ k_{\diamond}$ a constant such that $1 < k_{\diamond} < \frac{n}{2}$ and such that  $k_{\diamond} -1 \approx  k -1$ if $k-1 \ll 1$. \\
Let $h$ and $\tilde{g}$ be the following functions:

\begin{equation}
s \in \mathbb{R}^{+}: h(s) := g(s) -1, \; \tilde{g}(s):= g(s^{\frac{1}{2}}) \cdot
\nonumber
\end{equation}
Elementary considerations show that $\tilde{g} \left( s^{\epsilon_{\diamond}} \right) \geq \left( \frac{\epsilon_{\diamond}}{_{C_{\diamond}}} \right)^{\gamma} \tilde{g}(s)$. Hence $\tilde{h} (s^{2 \epsilon_{\diamond}}) \geq h(s)$ with

\begin{equation}
\tilde{h}(s):= \left( \frac{C_{\diamond}}{\epsilon_{\diamond}}  \right)^{\gamma} \tilde{g}(s) -1 \cdot
\nonumber
\end{equation}
Since $\gamma \ll 1$ (see Remark \ref{Rem:SmallGamma}), a study of the sign of the second derivative of $\tilde{g}$ and  $\tilde{h}$'s shows that $\tilde{g}$ and $\tilde{h}$ are concave on $(0, \infty)$ \footnote{We explain why we introduced $\tilde{g}$ and $\tilde{h}$. Observe that $g$ and $h$ are not concave on the whole interval $(0,\infty)$, as opposed to $\tilde{g}$ and $\tilde{h}$. So it is easier to apply the Jensen inequality to $\tilde{g}$ and $\tilde{h}$ than to $g$ and $h$. }. \\
Let $\theta_{\diamond} := \frac{ 1_{2}^{*} \Big( |k_{\diamond}|_{2}^{*} - (1_{2}^{*} + 2 \epsilon_{\diamond} ) \Big)}{_{(1_{2}^{*} +  2 \epsilon_{\diamond})
\Big( | k_{\diamond} |_{2}^{*} - 1_{2}^{*} \Big)}}$.
Observe that $ \frac{1}{1_{2}^{*} + 2 \epsilon_{\diamond}} = \frac{\theta_{\diamond}}{1_{2}^{*}} + \frac{1- \theta_{\diamond}}{_{ |k_{\diamond}|_{2}^{*}}} $. \\
\vphantom{ab} \\ \vphantom{ab}
We then define the functional $K_{cr}$ by

\begin{equation}
K_{cr}(f) := \| \nabla f \|^{2}_{L^{2}} - \| f \|^{1_{2}^{*}}_{L^{1_{2}^{*}}} .
\label{Eqn:DefK}
\end{equation}
Observe from (\ref{Eqn:W}) that

\begin{equation}
\begin{array}{ll}
K_{cr}(W) & = 0
\end{array}
\label{Eqn:ValKW}
\end{equation}

\subsection{A lemma}

We then prove a preliminary lemma.

\begin{lem}

There exists $\delta' \approx \delta^{\frac{1}{2}}$ such that for all $t \in J $

%\begin{equation}
%\begin{array}{l}
%\tilde{E}(u(t)) < (1- \delta) \tilde{E}(W)
%\end{array}
%\label{Eqn:tildeE}
%\end{equation}
%and

\begin{equation}
\begin{array}{l}
\int_{\mathbb{R}^{n}}  |\nabla u(t)|^{2} \; dx \leq (1 - \delta') \int_{\mathbb{R}^{n}} |\nabla W|^{2} \; dx \\
\\
K_{cr}(u(t)) \geq \delta' \int_{\mathbb{R}^{n}} |\nabla u(t)|^{2} \; dx,  \text{and}
\end{array}
\label{Eqn:MonotonVirial}
\end{equation}

\begin{equation}
\begin{array}{ll}
\| u(t) \|^{1_{2}^{*}}_{L^{1^{*}_{2}}} & \leq (1 - \delta') \| W \|^{1_{2}^{*}}_{L^{1_{2}^{*}}}
\end{array}
\label{Eqn:BoundPot}
\end{equation}

\label{lem:Monoton}
\end{lem}

\begin{proof}
Define

\begin{equation}
\begin{array}{ll}
\mathcal{F}:= \left\{ T \in J : \; (\ref{Eqn:MonotonVirial}) \; \text{holds for} \; t \in [0,T] \right\}
\end{array}
\nonumber
\end{equation}
We claim that $\mathcal{F} = J$. \\
\\
First observe that $0 \in \mathcal{F}$. Indeed using (\ref{Eqn:ValKW}) and $ E_{cr}(u_{0}) < ( 1 -  \delta ) E_{cr}(W) $
we get $ \| \nabla u_{0} \|_{L^{2}}^{2} < \| \nabla W \|^{2}_{L^{2}}$. But then this implies
(see \cite{kenmer}) that (\ref{Eqn:MonotonVirial}) holds at $t=0$. \\
Then we see that $ \mathcal{F}$ is closed by $\dot{H}^{1}-$ continuity of
$ t \rightarrow u(t)$, the embedding $ \dot{H}^{1} \hookrightarrow  L^{1_{2}^{*}} $, and elementary consequences of the Minkowski inequality. \\
It remains to prove that $\mathcal{F}$
is open. Let $T \in \mathcal{F}$. By continuity there exists $\beta > 0$ such that (\ref{Eqn:MonotonVirial}) holds for
$t' \in J \cap [0,T + \beta)$   with $\delta'$ replaced with $\frac{\delta'}{2}$. This implies that
$\| u(t') \|_{L^{1_{2}^{*}}} < \| W \|_{L^{1_{2}^{*}}}$. We claim the following: \\
\\
\underline{Claim}: We have

\begin{equation}
\begin{array}{l}
E_{cr}(u(t'))  <  \left( 1 - \frac{\delta}{4} \right)  E_{cr}(W).
\end{array}
\label{Eqn:BoundTildeE}
\end{equation}

\begin{proof}

An integration by parts shows that

\begin{equation}
\begin{array}{l}
P(|u_{0}(x)|) = \frac{g \left( |u_{0}(x)| \right) |u_{0}(x)|^{1_{2}^{*}}}{1_{2}^{*}}
- \frac{2 \gamma}{1_{2}^{*}} \int_{0}^{|u_{0}(x)|} \frac{s^{1_{2}^{*} + 1} \log^{\gamma -1} (2+ s^{2}) } {2+ s^{2}} \; ds \cdot
\end{array}
\nonumber
\end{equation}
Hence, from $ \gamma \ll \delta $ (see Remark \ref{Rem:SmallGamma}),
$ 1_{2}^{*} \int_{0}^{|u_{0}(x)|} s^{1_{2}^{*}-1} \; ds = |u_{0}(x)|^{1_{2}^{*}} $, and $E_{cr}(W)=
\left( \frac{1}{2} - \frac{1}{1_{2}^{*}} \right) \| W \|^{1_{2}^{*}}_{L^{1_{2}^{*}}}$ (see (\ref{Eqn:ValKW})), we see that
$E_{\gamma}(u_{0}) \leq \left( 1 - \frac{\delta}{2} \right) E_{cr}(W)$. By conservation of the energy we see that

\begin{equation}
\begin{array}{l}
E_{\gamma}(u(t')) \leq \left( 1  - \frac{\delta}{2} \right) E_{cr}(W) .
\end{array}
\nonumber
\end{equation}
Observe that $E_{cr}(f) = E_{\gamma}(f) + X (f) $ with

\begin{equation}
\begin{array}{l}
X(f) :=  \int_{\mathbb{R}^{n}}  \int_{(0,|f(x)|]} h(s) s^{1_{2}^{*} - 1} \; ds \; dx \cdot
\end{array}
\label{Eqn:Xf}
\end{equation}
Hence from the estimate above it is sufficient to prove that

\begin{equation}
\begin{array}{l}
X(u(t')) \leq \frac{\delta}{4} E_{cr}(W)\cdot
\end{array}
\label{Eqn:BoundXf}
\end{equation}
Let $\mu$ be the measure defined by $ d \mu := s^{1_{2}^{*} - 1} \mathbbm{1}_{(0,|u(t',x)|] } ds \; dx $. We have
$ X(u(t')) \lesssim \int_{\mathbb{R}^{n}} \int_{0}^{\infty} \tilde{h} \left( s^{2 \epsilon_{\diamond}}  \right) \; d \mu $.
We may assume WLOG that $\| u(t') \|^{1_{2}^{*}}_{L^{1_{2}^{*}}} \neq 0$. The Jensen inequality w.r.t the measure $\mu$  yields

\begin{equation}
\begin{array}{ll}
X (u(t')) \lesssim \| u(t') \|^{1_{2}^{*}}_{L^{1_{2}^{*}}} \tilde{h}
\left( \frac{\| u(t') \|^{1_{2}^{*} + 2 \epsilon_{\diamond}}_{L^{1_{2}^{*} + 2 \epsilon_{\diamond}}}}
{\| u(t') \|^{1_{2}^{*}}_{L^{1_{2}^{*}}}}  \right)
\end{array}
\nonumber
\end{equation}
Hence applying the interpolation inequality

\begin{equation}
\begin{array}{ll}
\| u(t') \|_{L^{1_{2}^{*} + 2 \epsilon_{\diamond}}} & \lesssim \| u(t') \|^{\theta_{\diamond}}_{L^{1_{2}^{*}}}
\| u(t') \|_{L^{|k_{\diamond}|_{2}^{*}}}^{1- \theta_{\diamond}}
\end{array}
\nonumber
\end{equation}
and the embedding $\tilde{H}^{k} \hookrightarrow L^{ |k_{\diamond}|_{2}^{*}}$  we see that

\begin{equation}
\begin{array}{l}
X(u(t')) \lesssim \| u(t')\|^{1_{2}^{*}}_{L^{1_{2}^{*}}}
\tilde{h} \left(  \frac{  \| u(t') \|_{L^{ |k_{\diamond}|_{2}^{*}}}^{ 2 c_{\diamond}  |k_{\diamond}|_{2}^{*}}}
{ \| u(t') \|_{L^{1_{2}^{*}}}^{2 c_{\diamond} 1_{2}^{*} } } \right) \lesssim \| u(t') \|^{1_{2}^{*}}_{L^{1_{2}^{*}}} \tilde{h} \left( \frac{M}
{ _{\| u(t') \|_{L^{1_{2}^{*}}}^{ 2 c_{\diamond} 1_{2}^{*} }}} \right)  \cdot
\end{array}
\label{Eqn:EstX2}
\end{equation}
Let $ Y :=  \frac{M} { _{\| u(t') \|_{L^{1_{2}^{*}}}^{ 2 c_{\diamond} 1_{2}^{*} }}} $. Observe that there exists a constant $ C > 0 $ such that
$ Y \geq C M $. A computation of the derivative of $ x \rightarrow x^{-\frac{1}{2 c_{\diamond}}} \tilde{h}(x) $ shows that this
function decreases for $ x \geq C M $, taking into account that $\gamma \ll 1 $ (see Remark \ref{Rem:SmallGamma}). Hence

\begin{equation}
\begin{array}{l}
X (u(t')) \lesssim  \left( \frac{M}{Y} \right)^{\frac{1}{2 c_{\diamond}}}  \tilde{h}(Y)  \lesssim \tilde{h} (C M)  \ll \delta \cdot
\end{array}
\nonumber
\end{equation}
Hence (\ref{Eqn:BoundXf}) holds, taking into account that $E_{cr}(W) = \left( \frac{1}{2} - \frac{1}{1_{2}^{*}} \right) \| W \|^{1_{2}^{*}}_{L^{1_{2}^{*}}}$.

\end{proof}

From (\ref{Eqn:BoundTildeE}) and $\| \nabla u(t') \|^{2}_{L^{2}} < \| \nabla W \|^{2}_{L^{2}}$  we see that
(\ref{Eqn:MonotonVirial}) holds for $t=t'$: see \cite{kenmer}. Hence $t' \in \mathcal{F}$ and $\mathcal{F}$ is open.
Hence $ \mathcal{F} = J $,  which implies that (\ref{Eqn:BoundPot}) holds.

\end{proof}

\subsection{The proof}

We prove now Proposition \ref{Prop:BoundLong} by using this lemma and concentration techniques (see e.g \cite{bourg,taorad}). \\
We divide the interval $J$ into subintervals $(J_{l}:=[\bar{t}_{l},\bar{t}_{l+1}])_{1  \leq  l \leq L}$ such that

\begin{equation}
\begin{array}{ll}
\| u \|^{\frac{2(n+2)}{n-2}}_{L_{t}^{\frac{2(n+2)}{n-2}}  L_{x}^{\frac{2(n+2)}{n-2}} (J_{l}) } & = \eta_{1}, \; \text{and}
\end{array}
\label{Eqn:Conc}
\end{equation}

\begin{equation}
\begin{array}{ll}
\| u \|^{\frac{2(n+2)}{n-2}}_{L_{t}^{\frac{2(n+2)}{n-2}}  L_{x}^{\frac{2(n+2)}{n-2}} (J_{L}) } & \leq \eta_{1},
\end{array}
\label{Eqn:LastConc}
\end{equation}
with $ 0 < \eta_{1} \ll 1 $. In view of (\ref{Eqn:BoundLong}), we may replace WLOG the $``\leq''$ sign with the $``=''$ sign in (\ref{Eqn:LastConc}).  \\
Recall the notion of exceptional intervals and the notion of unexceptional intervals (such a notion appeared
in the study of (\ref{Eqn:EnergyCrit}) in \cite{taorad}). Let $ \eta_{1} \gg \eta_{2} > 0 $ be a positive constant that is small enough and such that all the estimates and statements below are true. An interval $J_{l_{0}} = [\bar{t}_{l_{0}}, \bar{t}_{l_{0}+1}]$ of the partition $(J_{l})_{ 1 \leq l \leq L}$ is exceptional if

\begin{equation}
\begin{array}{ll}
\| u_{l,0} \|^{\frac{2(n+2)}{n-2}}_{L_{t}^{\frac{2(n+2)}{n-2}} L_{x}^{\frac{2(n+2)}{n-2}} (J_{l_{0}})  } + \| u_{l,a}
\|^{\frac{2(n+2)}{n-2}}_{L_{t}^{\frac{2(n+2)}{n-2}} L_{x}^{\frac{2(n+2)}{n-2}} (J_{l_{0}}) } & \geq \eta_{2}  \cdot
\end{array}
\end{equation}
In view of the embedding $ \| f \|_{L^{\frac{2(n+2)}{n-2}}} \lesssim  \| D f \|_{L^{\frac{2n(n+2)}{n^{2}+4}}} $, (\ref{Eqn:Strich}), and Lemma \ref{lem:Monoton} we have

\begin{equation}
\begin{array}{ll}
\card{\{ J_{l}: \, J_{l} \, \mathrm{exceptional} \}} & \lesssim \eta_{2}^{-1} \cdot
\end{array}
\label{Eqn:BoundCardExcep}
\end{equation}
In the sequel we use the estimate $g(M) \lesssim 1$ that follows from Remark \ref{Rem:SmallGamma}. By applying (\ref{Eqn:Strich}), the estimate
$k' > \frac{n}{2}: \; Q_{\frac{n}{2}+} (J,u) \lesssim Q_{k'}(J,u) $ that follows from interpolation, Lemma \ref{lem:Monoton}, and (\ref{Eqn:NonlinEst}) with $j=1$ to $J:=J_l$ for $J_l$ unexceptional interval, we may use the arguments of \cite{taorad} to get two results
\footnote{This is at this stage that we have to assume that $\eta_{2} \ll \eta_{1}$: see \cite{taorad}.}. The first result says there is a ball for which we have a mass concentration:

\begin{res}
There exist an $x_{l} \in \mathbb{R}^{n}$, two constants $ 0 <  c' \ll  1 $ and $C' \gg  1$ such that for each unexceptional interval $J_{l}$ and
for $t \in J_{l}$

\begin{equation}
\begin{array}{ll}
Mass \left( u(t), B(x_{l}, C'  |J_{l}|^{\frac{1}{2}} )   \right) & \geq c' |J_{l}|^{\frac{1}{2}} \cdot
\end{array}
\label{Eqn:ConcentrationMass}
\end{equation}

\label{res:ConcentrationMass}
\end{res}
The second result shows that in fact there is a mass concentration around the origin (the proof uses the radial symmetry)

\begin{res}
There exist a positive constant $\ll  1$ (that we still denote by $c'$ ) and a constant $\gg 1$ (that we still denote by $C'$ ) such that on each unexceptional interval $J_{l}$ we have

\begin{equation}
\begin{array}{ll}
Mass \left( u(t),  B(0, C' |J_{l}|^{\frac{1}{2}}  )  \right) & \geq c' |J_{l}|^{\frac{1}{2}} \cdot
\end{array}
\label{Eqn:ConcMassZero}
\end{equation}

\label{res:ConcMassZero}
\end{res}

Let $\tilde{J} := J_{i_0} \cup ... \cup J_{i_1}$ be a sequence of consecutive unexceptional intervals. Let $L_{\tilde{J}}$ be the
number of unexceptional intervals making $\tilde{J}$. Observe that

\begin{equation}
\begin{array}{l}
Number \; of \; \tilde{J}s \; \lesssim \eta_{2}^{-1} \cdot
\end{array}
\label{Eqn:CardSeq}
\end{equation}
We claim that one of the intervals $J_{l} \in \tilde{J}$ is large. More precisely

\begin{res}
There exists a positive constant $\ll  1$ (that we still denote by $c'$) and $\tilde{l} \in [i_0,..,i_1]$ such that

\begin{equation}
\begin{array}{ll}
|J_{\tilde{l}}|  &  \geq (c')^{ \delta^{- \frac{1}{2}}}  |\tilde{J}| \cdot
\end{array}
\label{Eqn:DistribInterv}
\end{equation}

\label{res:DistribInterv}
\end{res}

\begin{proof}
Let $a$ be a smooth function. Let $t \in \tilde{J}$. Let $v$ be a solution of $i \partial_{t} v + \triangle v =G$.
Let $\{ G, f \}_{p} := \Re( G \overline{\nabla f} - f \overline{\nabla G} )$. Recall the following facts
(see, e.g., \cite{collkeelstafftaktao}):

\begin{itemize}

\item if $G$ is of the form $ G(z) := F'(|z|^{2}) z $ then $ \{ G, f \}_{p} = - \nabla H(|f|^{2}) $ with
$H(x) := x F^{'}(x) - F(x) $.

\item We have

\begin{equation}
\begin{array}{ll}
\partial_{t} M_a  = \int (- \triangle \triangle a) |v|^{2} + 4 \partial^{2}_{x_j x_{k}} a
\Re (\overline{\partial_{x_j} v} \partial_{x_k} v )  + 2 \partial_{x_j} a \{ G, v \}_{p}^{j} \; dx,
\end{array}
\label{Eqn:DerMa}
\end{equation}
with $M_a(t):= \int 2 \partial_{x_j} a \Im(\bar{v} \partial_{x_{j}}v) \; dx $.

\end{itemize}
Let $m \in \mathbb{R}^{+}$ and $G(z) := - |z|^{\frac{4}{n-2}} z g(|z|) $. Let
$a(x) := m^{2} \phi \left( \frac{|x|}{m} \right) $ with $\phi$ a smooth, radial, and compactly supported function such that
$\phi(x)=|x|^{2}$ for $|x| \leq 1$. Then we have the well-known virial identity
\footnote{We define $\langle f,g \rangle := \Re  \left( \int_{\mathbb{R}^{n}} f(x) \bar{g}(x) \; dx \right) $.}

\begin{equation}
\begin{array}{l}
2m \; \partial_{t}  \left\langle  \phi^{'} \left( \frac{|x|}{m} \right) \frac{x_j}{|x|} v, - i \,  \partial_{x_j} v \right\rangle  = \int (- \triangle \triangle a) \, |v|^{2} \, + \, 4 \partial^{2}_{x_j x_{k}} a \,
\Re \left( \overline{\partial_{x_j} v} \partial_{x_k} v \right) \,  + \, 2 \triangle a  H(|v|^{2})  \; dx,
\end{array}
\label{Eqn:VirialPrim}
\end{equation}
with

\begin{equation}
\begin{array}{l}
H(y) := -y^{\frac{n}{n-2}} \tilde{g}(y)  + \int_{0}^{y} s^{\frac{2}{n-2}} \tilde{g}(s) \; ds \cdot
\end{array}
\nonumber
\end{equation}
Observe from Remark \ref{Rem:SmallGamma} that $\gamma \ll 1$. Hence, integrating by parts once the second term of $H(y)$ we see that

\begin{equation}
\begin{array}{l}
H (y)  = \left( \frac{n-2}{n} -1 \right) y^{\frac{n}{n-2}} \tilde{g}(y)
- \frac{n-2}{n} \int_{0}^{y} s^{\frac{n}{n-2}} \tilde{g}^{'}(s) \; ds
, \; \text{which implies that} \;  H(y) \approx - y^{\frac{n}{n-2}} \tilde{g}(y) \cdot
\end{array}
\label{Eqn:FormH}
\end{equation}
Hence one can write \footnote{Assume first that $n \in \{ 3,4 \}$. The above argument shows that
(\ref{Eqn:Virial}) holds for smooth solutions (i.e $\tilde{H}^{p}-$ solutions with $p$ large enough); in order to prove
(\ref{Eqn:Virial}) for $\tilde{H}^{k}-$ solutions, $k \in I_{n}$, one uses a standard approximation argument of $\tilde{H}^{k}-$
solutions with smooth solutions. If $n=5$ then proceed similarly as in Footnote \ref{Foot:EnergyRem} to obtain a identity similar to (\ref{Eqn:Virial}) for smooth solutions of the ``smoothed'' equation; then one should take limit in  $\tilde{H}^{k}$ by a standard approximation with smooth solutions.
}

\begin{equation}
\begin{array}{ll}
2m \, \partial_{t} \left \langle \phi^{'} \left( \frac{|x|}{m} \right) \frac{x_j}{|x|} u,  -i \,  \partial_{x_j} u \right \rangle  &  =
8 \int_{|x| \leq m} |\nabla u(t)|^{2} - |u(t)|^{1_{2}^{*}} \, dx - 8 \int_{|x| \leq m} h(|u(t)|) |u(t)|^{1_{2}^{*}} \; dx  \\
& \\
& + O \left( X_{m}(t) \right) + O \left( Y_{m}(t) \right) \\
& \\
& \geq 8 K_{cr}( \chi_m u(t) ) -  8 \int_{|x| \leq m} h(|u(t)|) |u(t)|^{1_{2}^{*}} \; dx \\
& \\
& + O \left( X_{m}(t) \right) + O \left( Y_{m}(t) \right) \cdot
\end{array}
\label{Eqn:Virial}
\end{equation}
Here $\chi_{m}(x) := \chi \left( \frac{|x|}{m} \right)$ with $\chi$ a
smooth function compactly supported on $B(0,2)$ such that $\chi(x)=1$ if $|x| \leq 1$,
$X_{m}(t) := \int_{|x| \leq 2m} \int_{0}^{|u(t)|^{2}} t'^{\frac{n}{n-2}} \tilde{g}^{'}(t') \; dt' \; dx $, and
$Y_{m}(t) := \int_{|x| \geq m} \frac{m}{|x|} \left( |\partial_{r} u(t)|^{2} +  \frac{|u(t)|^{2}}{|x|^{2}} + |u(t)|^{1_{2}^{*}} g(|u(t)|) \right) \; d x$. \\
Observe also from Remark \ref{Rem:SmallGamma} that $\gamma \ll \delta^{\frac{1}{2}}$. Hence, using also (\ref{Eqn:BoundPot})

\begin{equation}
\begin{array}{l}
X_{m}(t) \ll  \delta^{\frac{1}{2}}  \left( \ \int_{|x| \leq m} |u(t)|^{1_{2}^{*}} \; dx + 1 \right)
\end{array}
\nonumber
\end{equation}

\underline{Claim}: there exists a constant $\tilde{c}_{1} \approx \delta^{\frac{1}{2}}$ such that

\begin{equation}
\begin{array}{l}
K_{cr} \left( \chi_m u (t) \right) \geq \tilde{c}_{1} \| \chi_{m} u(t) \|^{1_{2}^{*}}_{L^{1_{2}^{*}}}  \cdot
\end{array}
\label{Eqn:KMon}
\end{equation}

\begin{proof}

Indeed (see \cite{roynak} for a similar argument, see also \cite{nakschlagkg} for the subcritical nonlinearities) we see from
(\ref{Eqn:W}) and (\ref{Eqn:BoundPot}) that

\begin{equation}
\begin{array}{l}
\left\| \chi_{m} u(t) \right\|^{1_{2}^{*}}_{L^{1_{2}^{*}}}  < \left\| u(t) \right\|^{1_{2}^{*}}_{L^{1_{2}^{*}}} <  (1 -\delta') \| W \|^{1_{2}^{*}}_{L^{1_{2}^{*}}};
\end{array}
\nonumber
\end{equation}
Hence we see from the sharp Sobolev inequality that there exists $ \tilde{c}_{1} \approx \delta^{\frac{1}{2}}$ such that

\begin{equation}
\begin{array}{ll}
K_{cr} \left( \chi_m u(t) \right) & \geq \left\| \nabla \left( \chi_m u(t) \right) \right\|_{L^{2}}^{2} \left( 1 - C_{*}^{2}
\left\| \chi_m u(t) \right\|^{1_{2}^{*}-2}_{L^{1_{2}^{*}}}  \right) \\
& \geq \left\| \nabla \left( \chi_m u (t) \right) \right\|_{L^{2}}^{2} \left( 1 - C_{*}^{2} (1 -\delta')^{\frac{1_{2}^{*}-2}{1_{2}^{*}}}
\| W \|^{1_{2}^{*} - 2}_{L^{1_{2}^{*}}} \right) \\
& \geq \tilde{c}_{1} \left\| \nabla \left( \chi_m u (t) \right) \right\|_{L^{2}}^{2} \cdot
\end{array}
\nonumber
\end{equation}
Hence (\ref{Eqn:KMon}) holds.

\end{proof}

Let $\bar{K} \in \mathbb{N}^{*}$ and let $\bar{m} := 2^{- \bar{K}} m$. Define $ \sum \limits_{m'= \bar{m}}^{m} a_{m'} := \sum \limits_{k=0}^{\bar{K}} a_{2^{-k} m} $.
The following claim is used in the sequel:\\
\\
\underline{Claim}: \\
\\
Let $Z(t):= \sum \limits_{m'= \bar{m}}^{m} \int_{|x| \geq m'} \frac{m'}{|x|} |u(t)|^{1_{2}^{*}} g(|u(t)|) \; dx$. Then
$ Z(t) \lesssim  1 $ .

\begin{proof}

Clearly $Z (t) \lesssim \left( \frac{C_{\diamond}}{\epsilon_{\diamond}} \right)^{\gamma}
\sum \limits_{m'=\bar{m}}^{m}  \int_{  } \frac{m'}{|x|} \tilde{g}(|u(t)|^{2 \epsilon_{\diamond}}) |u(t)|^{1_{2}^{*}}
\mathbbm{1}_{|x| \geq m'} \; dx \cdot $. The Fubini theorem shows that $ \sum \limits_{m' = \bar{m}}^{m} \int_{|x| \geq m'} \frac{m'}{|x|} |u(t)|^{^{|k_{\diamond}|_{2}^{*} }} \; dx \lesssim
\| u(t) \|^{^{|k_{\diamond} |_{2}^{*}}}_{L^{ | k_{\diamond} |_{2}^{*}}} $. Let $\mu(t)$ be the measure defined by $ d \mu(t) := \sum \limits_{m'=\bar{m}}^{m} \frac{m^{'}}{|x|} |u(t)|^{1_{2}^{*}} \; \mathbbm{1}_{|x| \geq m'} dx $. Let $\bar{\mu}(t)$ be the measure defined by $ d \bar{\mu}(t) := \sum \limits_{m'=\bar{m}}^{m} \frac{m'}{|x|} \mathbbm{1}_{|x| \geq m'} dx $.
We have

\begin{equation}
\begin{array}{ll}
Z(t) & \lesssim  \left( \frac{C_{\diamond}}{\epsilon_{\diamond}} \right)^{\gamma} \int_{\mathbb{R}^{n}}
\tilde{g} \left( |u(t)|^{2 \epsilon_{\diamond}} \right)  \; d \mu(t) \cdot
\end{array}
\nonumber
\end{equation}
We may assume WLOG that $ \| u(t) \|^{1_{2}^{*}}_{L^{1_{2}^{*}}(d \bar{\mu}(t))} \neq 0 $. We get from the Jensen inequality w.r.t the measure $\mu(t)$

\begin{equation}
\begin{array}{ll}
Z(t) & \lesssim \left( \frac{C_{\diamond}}{ \epsilon_{\diamond}} \right)^{\gamma} \| u(t) \|^{1_{2}^{*}}_{L^{1_{2}^{*}}(d \bar{\mu}(t))}
\tilde{g} \left( \frac{\| u(t) \|^{1_{2}^{*} + 2 \epsilon_{\diamond}}_{L^{1_{2}^{*} + 2 \epsilon_{\diamond}}(d \bar{\mu}(t) )  }}
{ \| u(t) \|^{1_{2}^{*}}_{L^{1_{2}^{*}}(d \bar{\mu}(t)) } } \right)  \\
& \lesssim \left( \frac{C_{\diamond}}{\epsilon_{\diamond}} \right)^{\gamma} \tilde{g}
\left( \frac{\| u(t)\|^{2 c_{\diamond} |k_{\diamond}|_{2}^{*}}_{L^{^{| k_{\diamond}|_{2}^{*}}}(d \bar{\mu}(t))} }{ \| u(t) \|^{2 c_{\diamond} 1_{2}^{*}}_{L^{1_{2}^{*}}(d \bar{\mu}(t))}  }
\right) \| u(t) \|^{1_{2}^{*}}_{L^{1_{2}^{*}}(d \bar{\mu}(t))}  \\
& \lesssim \left( \frac{C_{\diamond}}{\epsilon_{\diamond}} \right)^{\gamma} \tilde{g}
\left( \frac{\| u(t)\|^{2 c_{\diamond} |k_{\diamond}|_{2}^{*}}_{\tilde{H}^{k}}} { \| u(t) \|^{2 c_{\diamond} 1_{2}^{*}}_{L^{1_{2}^{*}}(d \bar{\mu}(t))}  }
\right) \| u(t) \|^{1_{2}^{*}}_{L^{1_{2}^{*}}(d \bar{\mu}(t))} \\
& \lesssim \left( \frac{C_{\diamond}}{\epsilon_{\diamond}} \right)^{\gamma}
\tilde{g} \left( \frac{M}{ \| u(t) \|^{2 c_{\diamond} 1_{2}^{*}}_{L^{1_{2}^{*}}(d \bar{\mu}(t))}} \right)  \| u(t) \|^{1_{2}^{*}}_{L^{1_{2}^{*}}(d \bar{\mu}(t))}
\end{array}
\nonumber
\end{equation}
where at the second line we apply the interpolation inequality

\begin{equation}
\begin{array}{l}
\| u(t) \|_{L^{1_{2}^{*} + 2 \epsilon_{\diamond}}(d \bar{\mu}(t))} \lesssim
\| u(t) \|^{\theta_{\diamond}}_{L^{1_{2}^{*}}(d \bar{\mu}(t))}
\| u(t) \|^{1 - \theta_{\diamond}}_{ L^{^{|k_{\diamond}|_{2}^{*}}}(d \bar{\mu}(t))}
\end{array}
\label{Eqn:HolderMeas}
\end{equation}
and at the fourth line we used the embedding $ \tilde{H}^{k} \hookrightarrow L^{^{|k_{\diamond}|_{2}^{*}}}(d \bar{\mu}(t))$. Let
$ Y:= \frac{M} {\| u(t) \|^{2 c_{\diamond} 1_{2}^{*}}_{L^{1_{2}^{*}}(d \bar{\mu}(t))} } $. Proceeding similarly as below
(\ref{Eqn:EstX2}) and taking into account Lemma \ref{lem:Monoton} we see that there exists a constant $C > 0$ such that

\begin{equation}
\begin{array}{l}
Z(t) \lesssim  \left( \frac{C_{\diamond}}{\epsilon_{\diamond}} \right)^{\gamma}  \left( \frac{M}{Y} \right)^{\frac{1}{2 c_{\diamond}}}
\tilde{g}(Y)  \lesssim \left( \frac{C_{\diamond}}{\epsilon_{\diamond}} \right)^{\gamma}  \tilde{g} (C M) \lesssim 1 \cdot
\end{array}
\nonumber
\end{equation}
Hence $Z(t) \lesssim 1$.

\end{proof}
We continue the proof of Result \ref{res:DistribInterv}. The Hardy inequality, the Fubini theorem, and Lemma \ref{lem:Monoton} show that

\begin{equation}
\begin{array}{ll}
\sum \limits_{m'= \bar{m}}^{m} \int_{|x| \geq m'} \frac{m'}{|x|} \left( |\partial_{r} u(t)|^{2} + \frac{|u(t)|^{2}}{|x|^{2}} \right) \; dx & \lesssim 1 \cdot
\end{array}
\nonumber
\end{equation}
In the sequel we let $m:= |\tilde{J}|^{\frac{1}{2}}$. \\
\\
\underline{Claim}: Let $\tilde{c}_2 > 0 $. Assume that $\tilde{h}(\tilde{c}_2^{-1} M) \ll \delta$. One can find $\tilde{C}_2 \gg 1$ large enough such that there exists $\tilde{t} \in \tilde{J}$ such that
s
\begin{equation}
\begin{array}{l}
\int_{|x| \leq \bar{m}} |u(\tilde{t})|^{1_{2}^{*}} \; dx \leq \tilde{c}_2,
\end{array}
\label{Eqn:DecayPotOneTime}
\end{equation}
with $\bar{K} := \tilde{C}_2  \delta^{-\frac{1}{2}}$.\\

\begin{proof}
Let $\tilde{C}_{2} \gg 1$ be a constant large enough such all the estimates and the statements below are true. Assume that
(\ref{Eqn:DecayPotOneTime}) does not hold. Hence for all $t \in J$ and for all $\bar{m} \leq m' \leq m$ we have

\begin{equation}
\begin{array}{l}
\int_{|x| \leq m'} |u(t)|^{1_{2}^{*}} \; dx \geq \tilde{c}_{2} \cdot
\end{array}
\label{Eqn:Contr}
\end{equation}
Let $ Z(t) := \sum \limits_{m^{'} = \bar{m}}^{m} \int_{|x| \leq m'} h(|u(t)|) |u(t)|^{1_{2}^{*}} \; dx $. Let $\mu_{m'}(t)$ be the measure defined by $d \mu_{m'}(t) :=  |u(t)|^{1_{2}^{*}} \; \mathbbm{1}_{|x| \leq m'} \; dx  $. Let $\bar{\mu}_{m'}(t)$ be the measure defined by $d \bar{\mu}_{m'}(t) := \mathbbm{1}_{|x| \leq m'} \; dx $. We get from the Jensen inequality w.r.t $ \mu_{m'}(t) $ and from (\ref{Eqn:Contr})

\begin{equation}
\begin{array}{ll}
Z(t) &  \lesssim \sum  \limits_{m' = \bar{m}}^{m} \| u(t) \|^{1_{2}^{*}}_{L^{1_{2}^{*}}(d \bar{\mu}_{m'}(t))}
 \tilde{h} \left( \frac{ \| u(t) \|^{1_{2}^{*} + 2 \epsilon_{\diamond}}_{L^{1_{2}^{*} + 2 \epsilon_{\diamond}} (d \bar{\mu}_{m'}(t)) }}
 { \| u(t') \|^{1_{2}^{*}}_{L^{1_{2}^{*}} (d \bar{\mu}_{m'}(t)) } } \right) \\
& \\
& \lesssim  \sum \limits_{m' = \bar{m}}^{m} \| u(t) \|^{1_{2}^{*}}_{L^{1_{2}^{*}}(d \bar{\mu}_{m'}(t))}
\tilde{h}
\left(
\frac{ \| u(t) \|_{L^{^{|k_{\diamond}|_{2}^{*}}} (d \bar{\mu}_{m'}(t)) }^{2  c_{\diamond} |k_{\diamond}|_{2}^{*}}}
{ { \| u(t) \|^{2  c_{\diamond} 1_{2}^{*}}_{L^{1_{2}^{*}}(d \bar{\mu}_{m'}(t))}}  }
\right) \\
&  \\
& \lesssim    \sum \limits_{m' = \bar{m}}^{m} \| u(t) \|^{1_{2}^{*}}_{L^{1_{2}^{*}}(d \bar{\mu}_{m'}(t))}
\tilde{h} \left( \frac{M}{ \| u(t) \|^{2 c_{\diamond} 1_{2}^{*} }_{L^{1_{2}^{*}}( d \bar{\mu}_{m'}(t)) } } \right) \\
& \ll \delta \sum \limits_{m' = \bar{m}}^{m} \int_{|x| \leq m'}  |u(t)|^{1_{2}^{*}} \; dx,
\end{array}
\nonumber
\end{equation}
where at the second line we use (\ref{Eqn:HolderMeas}), replacing $\bar{\mu}(t) $ with $ \bar{\mu}_{m'}(t) $. \\
The Cauchy-Schwarz inequality, the fundamental theorem of calculus, Lemma \ref{lem:Monoton} and the Hardy inequality imply that

\begin{equation}
\begin{array}{ll}
\left| \int_{\tilde{J}} 2 m \; \partial_{t}  \langle  \phi^{'} \left( \frac{|x|}{m} \right) \frac{x_{j}}{|x|} u, - i \partial_{x_{j}} u  \rangle  \; dt \right|
\lesssim m^{2} \; \sup_{ t \in \tilde{J}} \left( \left\| \frac{u(t)}{x} \right\|_{L^{2}} \| \nabla u(t) \|_{L^{2}} \right)  \lesssim m^{2} \cdot
\end{array}
\nonumber
\end{equation}
Hence, in view of the above estimates, Remark \ref{Rem:SmallGamma}, and (\ref{Eqn:Virial}) we have, using an argument in \cite{taofoc}

\begin{equation}
\begin{array}{ll}
\sum \limits_{m' = \bar{m}}^{m} \int_{\tilde{J}} \int_{|x| \leq m'} |u(t)|^{1_{2}^{*}} \; dx \; dt & \lesssim \delta^{-\frac{1}{2}}  m^{2} ,
\end{array}
\nonumber
\end{equation}
from which we get

\begin{equation}
\begin{array}{ll}
\int_{\tilde{J}} \int_{|x| \leq \bar{m}} |u(t)|^{1_{2}^{*}} \; dx \; dt & \lesssim \delta^{-\frac{1}{2}} \bar{K}^{-1} |\tilde{J}| \cdot
\end{array}
\nonumber
\end{equation}
Hence there exists $\bar{t} \in \tilde{J}$ such that

\begin{equation}
\begin{array}{ll}
\int_{|x| \leq  \bar{m}} |u(\bar{t})|^{1_{2}^{*}} \; dx & \lesssim \delta^{-\frac{1}{2}}  \bar{K}^{-1}  \cdot
\end{array}
\label{Eqn:Decay}
\end{equation}
This is a contradiction.
\end{proof}
But then, by choosing appropriately $\tilde{c}_{2}$,
we see that (\ref{Eqn:DistribInterv}) holds: if not this would violate (\ref{Eqn:DecayPotOneTime})
from (\ref{Eqn:ConcMassZero}) and H\"older inequality.

\end{proof}

Next we recall a crucial algorithm due to Bourgain \cite{bourg} to prove that there are many of those intervals that concentrate.

\begin{res}
Let $ \eta:= (c')^{ \delta^{-\frac{1}{2}}}$. There exist a time $\bar{t}$, $K > 0$ and intervals
$J_{i_{l_{1}}}$, ...., $J_{i_{l_{K}}}$ such that

\begin{equation}
\begin{array}{ll}
|J_{i_{l_{1}}}| \geq  2 |J_{i_{l_{2}}}| ... \geq 2^{k-1} |J_{i_{l_{k}}}| ... \geq 2^{K-1} |J_{i_{l_{K}}}|,
\end{array}
\label{Eqn:SizeJ}
\end{equation}

\begin{equation}
\begin{array}{ll}
dist(\bar{t}, J_{i_{l_{k}}}  ) \leq \eta^{-1} |J_{i_{l_k}}|,
\end{array}
\label{Eqn:ControlDist}
\end{equation}
and

\begin{equation}
\begin{array}{ll}
K & \geq - \frac{\log{(L_{\tilde{J}})}}{2 \log { \left(  \frac{\eta}{8} \right)}  } \cdot
\end{array}
\label{Eqn:LowerBdK}
\end{equation}

\end{res}
A proof of this result in such a state can be found in \cite{triroyrad} (see also \cite{taorad}
from which the proof is inspired). \\
\\
With this result in mind we prove that $L_{\tilde{J}} < \infty$. More  precisely

\begin{res}
There exists one constant  $C_{1} \gg  1$  such that

\begin{equation}
\begin{array}{l}
L_{\tilde{J}} \leq  C_{1}^{C_{1}^{\delta^{-\frac{1}{2}} }}
\end{array}
\label{Eqn:BoundLFin}
\end{equation}

\label{Res:BoundL}
\end{res}

\begin{proof}
Here we use an argument in \cite{taorad}. \\
Let $C \gg 1$ be a constant that is allowed to change %\blc CH \bc
from one line to the other one and such that all the estimates below are true. Let
$R_{i_{l_k}} := C^{\delta^{-\frac{1}{2} } }
|J_{i_{l_k}}|^{\frac{1}{2}}$. By  Result \ref{res:ConcentrationMass} and by (\ref{Eqn:UpBdDerivM})  we have

\begin{equation}
\begin{array}{ll}
Mass \left( u(t),  B(x_{i_{l_{k}}},R_{i_{l_k}}) \right) & \geq c'  |J_{i_{l_k}}|^{\frac{1}{2}}
\end{array}
\label{Eqn:LowerBondMass2}
\end{equation}
for all $t \in J_{i_{l_{k}}}$. By (\ref{Eqn:UpBdDerivM}) and (\ref{Eqn:ControlDist}) we see that (\ref{Eqn:LowerBondMass2}) holds for
$t=\bar{t}$ with $c'$ substituted with $\frac{c'}{2}$. On the other hand we see that by (\ref{Eqn:MassControl}) and
(\ref{Eqn:SizeJ}) that \footnote{Notation: $\sum\limits_{k^{'}=k+N}^{K} a_{k^{'}}=0$, if $k^{'}> K$ }

\begin{equation}
\begin{array}{ll}
\sum\limits_{k^{'}=k+N}^{K} \int_{B(x_{i_{l_{k'}}} ,R_{i_{l_{k'}}} )} |u(\bar{t})|^{2} \, dx & \lesssim  \left( \frac{1}{2^{N}} + \frac{1}{2^{N+1}}.... +
\frac{1}{2^{K-k}} \right)  R_{i_{l_k}}^{2} \\
& \lesssim \frac{1}{2^{N-1}}  R_{i_{l_k}}^{2} \cdot
\end{array}
\nonumber
\end{equation}
Now we let  $N := C \delta^{-\frac{1}{2}}  $  so that $ \frac{R_{i_{l_k}}^{2}}{2^{N-1}} \ll \frac{1}{8} (c')^{2}
|J_{i_{l_k}}| $. By (\ref{Eqn:LowerBondMass2}) we have

\begin{equation}
\begin{array}{ll}
\sum\limits_{k^{'}=k+N}^{K} \int_{B(x_{i_{l_{k^{'}}}} ,R_{i_{l_{k'}}} )} |u(\bar{t})|^{2} \, dx & \leq
\frac{1}{2}  \int_{B(x_{i_{l_{k}}},R_{i_{l_k}})} |u(\bar{t})|^{2} \, dx \cdot
\end{array}
\nonumber
\end{equation}
Therefore

\begin{equation}
\begin{array}{ll}
\int_{_{B(x_{i_{l_{k}}}, R_{i_{l_k}}) / \bigcup_{k^{'}=k+N}^{K} B (x_{i_{l_{k^{'}}}} ,R_{i_{l_{k'}}})  } } \   |u(\bar{t})|^{2} \, dx & \geq \frac{1}{2}  \int_{B(x_{i_{l_{k}}},R_{i_{l_k}})} |u(\bar{t})|^{2} \, dx \\
& \geq \frac{(c')^{2}}{8} |J_{i_{l_k}}|,
\end{array}
\nonumber
\end{equation}
and by H\"older inequality, there exists a small constant (that we still denote by $c'$) such that

\begin{equation}
\begin{array}{ll}
\int_{_{B(x_{i_{l_{k}}}, R_{i_{l_k}}) / \bigcup_{k^{'}=k+N}^{K} B (x_{i_{l_{k^{'}}}} ,R_{i_{l_{k'}}})  } } \   |u(\bar{t})|^{1_{2}^{*}} \, dx & \geq
(c')^{ \delta^{-\frac{1}{2}}},
\end{array}
\nonumber
\end{equation}
and after summation over $k$, we get

\begin{equation}
\begin{array}{l}
\frac{K}{N} (c')^{ \delta^{-\frac{1}{2}} } \lesssim 1,
\end{array}
\nonumber
\end{equation}
from $\sum \limits_{k=1}^{K} \chi_{ _{ B(x_{i_{l_{k}}}, R_{i_{l_k}}) / \cup_{k^{'}=k+N}^{K}
B (x_{i_{l_{k^{'}}}} ,R_{i_{l_{k'}}})}} \leq N$  and (\ref{Eqn:BoundPot}). Hence from (\ref{Eqn:LowerBdK}) we see that there exists a constant $C_{1} \gg  1$ such that (\ref{Eqn:BoundLFin}) holds. \\
\end{proof}

In view of Result \ref{Res:BoundL}, (\ref{Eqn:Conc}), (\ref{Eqn:BoundCardExcep}) and (\ref{Eqn:CardSeq}), we see that
(\ref{Eqn:BoundLong}) holds. \\

\section{Appendixes}

\subsection{Appendix $A$: equivalence of assumptions in  \cite{kenmer}}

Let $\delta > 0$. We denote by $Ass \; 0$, $Ass \; 1$, and $Ass \; 2$ the following assumptions

\begin{equation}
\begin{array}{ll}
Ass \; 0: & E_{cr}(u_{0}) < (1- \delta) E_{cr}(W) \\
Ass \; 1: & \| \nabla u_{0} \|_{L^{2}} < \| \nabla W \|_{L^{2}} \\
Ass \; 2: & \| u_{0} \|_{L^{1_{2}^{*}}} < \| W \|_{L^{1_{2}^{*}}}
\end{array}
\nonumber
\end{equation}
It was proved in \cite{kenmer} that if $Ass \; 0$ and $Ass \; 1$ hold then $u$ exists for all time and scatters as
$ t \rightarrow \pm \infty $. \\
Observe that $ Ass \; 0$ and $ Ass \; 1 $ hold if and only if $ Ass \; 0 $ and $ Ass \; 2$ hold. Indeed assume that $ Ass \; 0$ and $ Ass \; 1$ hold. Then the sharp Sobolev inequality (see Remark \ref{Rem:Imp}) and (\ref{Eqn:ValKW}) yield
$ K_{cr}(u_{0}) \geq \| \nabla u_{0} \|^{2}_{L^{2}} - \frac{1}{\| \nabla  W \|^{1_{2}^{*} -2}_{L^{2}}} \| \nabla u_{0} \|^{1_{2}^{*}}_{L^{2}} $
with $ K_{cr}(f)$ defined in (\ref{Eqn:DefK}). Hence
elementary considerations show that $ K_{cr}(u_{0}) \geq 0$.
Hence from  $ E_{cr}(u_{0}) =  \left( \frac{1}{2} - \frac{1}{1_{2}^{*}} \right) \| u_{0} \|^{1_{2}^{*}}_{L^{1_{2}^{*}}} + \frac{1}{2} K_{cr}(u_{0})$
and (\ref{Eqn:ValKW}) we see that $ Ass \; 2 $ holds. Conversely if $ Ass \; 0 $ and $ Ass \; 2 $ hold then it is clear that $ Ass \; 0 $ and $ Ass \; 1 $ hold.\\
Hence if $Ass \; 0$ and $Ass \; 2$ hold then $u$ exists for all time and scatters as
$ t \rightarrow \pm \infty$.

\subsection{Appendix $B$: scattering for small data}

Let $J \subset \mathbb{R}$ be an interval containing $0$ and such that $J \subsetneqq I_{max} $. Observe from
the embedding $ \| f \|_{L^{\frac{2(n+2)}{n-2}}} \lesssim \| D f \|_{L^{\frac{2n(n+2)}{n+4}}} $ and the
interpolation of $\| D u \|_{L_{t}^{\frac{2n(n+2)}{n^{2}+4}} L_{x}^{\frac{2n(n+2)}{n^{2}+4}} (J)} $ between
$ \| D u \|_{L_{t}^{\infty} L_{x}^{2}(J)} $ and  $ \| D u \|_{L_{t}^{\frac{2(n+2)}{n}} L_{x}^{\frac{2(n+2)}{n}} (J)} $ that
$\| u \|_{L_{t}^{\frac{2(n+2)}{n-2}} L_{x}^{\frac{2(n+2)}{n-2}} (J)} \lesssim Q(J,u)$. (\ref{Eqn:Strich}), (\ref{Eqn:NonlinEst}), and Proposition \ref{Prop:EstHighReg} allow to show that if $ \| u_0 \|_{\tilde{H}^{k}} \ll 1$, then the solution $u$ constructed by Proposition \ref{Prop:LocalWell} satisfies the conclusions of Theorem  \ref{thm:main}. More precisely there exists a positive constant $C$ such that

\begin{equation}
\begin{array}{ll}
Q(J,u) & \lesssim \| u_0 \|_{\tilde{H}^{k}} +  \| D( |u|^{\frac{4}{n-2}} u g(|u|) ) \|_{L_{t}^{\frac{2(n+2)}{n+4}}  L_{x}^{\frac{2(n+2)}{n+4}}(J)}
+ \| D^{k}( |u|^{\frac{4}{n-2}} u g(|u|) ) \|_{L_{t}^{\frac{2(n+2)}{n+4}}  L_{x}^{\frac{2(n+2)}{n+4}} (J)}  \\
& \lesssim \| u_0 \|_{\tilde{H}^{k}} +   \langle Q(J,u) \rangle^{C} \| u \|^{\frac{4}{n-2}-}_{L_{t}^{\frac{2(n+2)}{n-2}} L_{x}^{\frac{2(n+2)}{n-2}}(J)}  \\
& \lesssim \| u_0 \|_{\tilde{H}^{k}} +  \langle Q(J,u) \rangle^{C}  Q^{\frac{4}{n-2}-}(J,u) \cdot
\end{array}
\nonumber
\end{equation}
A continuity argument applied to the estimate above shows that $I_{max} = \mathbb{R}$ and that $Q (\mathbb{R},u) \lesssim \| u_{0} \|_{\tilde{H}^{k}} < \infty$. \\
Let $1 \gg \epsilon > 0 $. There exists $A(\epsilon)$ large enough such that if $t_2 \geq t_1 \geq A(\epsilon)$ then
$\| u \|_{L_{t}^{\frac{2(n+2)}{n-2}} L_{x}^{\frac{2(n+2)}{n-2}} ([t_1,t_2])} \ll \epsilon$. Hence
$\left\| e^{-it_{1} \triangle} u(t_1) - e^{-i t_{2} \triangle} u(t_2) \right\|_{\tilde{H}^{k}} \ll \epsilon$ by a similar
estimate to (\ref{Eqn:Diff}). Hence (\ref{Eqn:Scatt}) holds.

%\subsection{Appendix $D$: Proposition \ref{Prop:NonlinFracSmooth}}

%In the appendix we first explain how Proposition \ref{Prop:nonlinFracSmooth} is related to Proposition $7$ in \cite{triroyrad}. In particular we explain
%that Proposition \ref{Prop:NonlinFracSmooth} in the case $`` \beta < k' -1 $ '' was proved in \cite{triroyrad} We also write down some comments about
%Proposition $7$ in \cite{triroyrad}. Then we write down the proof of Proposition
%\ref{Prop:NonlinFracSmooth} in the limit case  $``\beta = k'-1$''. Finally we point out a technical error in \cite{triroyrad} and we explain how to fix it
%by using Proposition \ref{Prop:NonlinFracSmooth}.

%Proposition \ref{Prop:NonlinFracSmooth}

\subsection{Appendix $C$: comments on paper \cite{triroyrad}}
\label{Subsec:CommPropTriroyRad}

\subsubsection{Proposition $7$ in \cite{triroyrad} }

The proposition stated below is a slight modification of Proposition $7$ in \cite{triroyrad}:

 \begin{prop}
 Let $ 0  \leq \alpha < 1$, $k'$ and $\beta$ be integers such that $k' \geq 2$ and $\beta > k'-1 $, $(r ,r_{2}) \in (1,\infty)^{2}$,
 $(r_{1},r_{3}) \in (1, \infty]^{2}$ be such that $\frac{1}{r}= \frac{\beta}{r_{1}} + \frac{1}{r_{2}} +\frac{1}{r_{3}}$. Let $F: \mathbb{R}^{+} \rightarrow \mathbb{R}$
 be a $\mathcal{C}^{k'}-$ function, let $\tilde{F}: \mathbb{R}^{+} \rightarrow \mathbb{R}$ be a nondecreasing function, and let $ G := \mathbb{R}^{2} \rightarrow \mathbb{R}^{2} $ be a $\mathcal{C}^{k^{'}}- $ function such that

 \begin{equation}
 \begin{array}{l}
 (a): \;
 \left\{
 \begin{array}{l}
 \tau \in [0,1]: \; \left| \tilde{F} \left( |\tau x + (1-\tau)y|^{2} \right) \right|
 \lesssim  \left| \tilde{F}(|x|^{2}) \right| + \left| \tilde{F}( |y|^{2}) \right|, \,  \text{and} \\
 i \in \{0,...,k^{'} \}:  \; F^{[i]}(x) = O \left( \frac{\tilde{F}(x)}{x^{i}} \right)
 \end{array}
 \right.
 \\
 (b): \; i \in \{0,...,k^{'} \}: \;  G^{[i]}(x,\bar{x}) = O \left( |x|^{\beta + 1  - i} \right) \cdot
 \end{array}
 \nonumber
 \end{equation}
 Then

 \begin{equation}
 \begin{array}{ll}
 \left\| D^{ k' -1 + \alpha} \left( G(f,\bar{f}) F(|f|^{2}) \right) \right\|_{L^{r}} & \lesssim \| f \|^{\beta}_{L^{r_{1}}} \| D^{k' -1  + \alpha} f \|_{L^{r_{2}}}
 \| \tilde{F}(|f|^{2}) \|_{L^{r_{3}}}
 \end{array}
 \label{Eqn:EstToProveFrac2}
 \end{equation}
 Here $F^{[i]}$ and $G^{[i]}$ denotes the $i^{th}-$ derivative of $F$ and $G$ respectively.

 \label{Prop:NonlinFracSmooth2}
 \end{prop}

\begin{rem}

We mention the differences between the statement of the proposition above and that of Proposition $7$ in \cite{triroyrad}:

\begin{itemize}

\item the $\mathcal{C}^{k}-$ regularity assumption of $\tilde{F}$ present in the statement of Proposition $7$ in \cite{triroyrad} has been removed from that of the proposition above. Indeed, while reading the proof of Proposition $7$ in \cite{triroyrad} we have observed that this assumption is not necessary to prove
    (\ref{Eqn:EstToProveFrac2}).

\item one has added an extra assumption, namely the nondecreasing property of $\tilde{F}$. Indeed, recall that we prove Proposition $7$ in \cite{triroyrad}
first in the particular case where $\tilde{F} := F$ and then in the general case. While reading the proof of Proposition $7$ in \cite{triroyrad} we have observed
that this extra assumption \footnote{Observe that this extra assumption does not invalidate the use of Proposition $7$ in \cite{triroyrad} since Proposition $7$ is only applied in this paper to nondecreasing functions of the form $\tilde{F}(x) := \log^{c} \log \left( 10 + x \right) $.} is necessary to apply some Leibnitz-type rules  \footnote{It is well-known (see e.g \cite{taylor} and references therein) that if $ \left| H^{'} \left(\tau x + (1- \tau) y \right) \right| \lesssim \tilde{H}(x) + \tilde{H}(y) $ holds for
 some well-chosen $\tilde{H}$, then the fractional Leibnitz-type composition rule
 $ \left\| D^{\alpha'} H(f) \right\|_{L^{q}} \lesssim \left\| \tilde{H}(f) \right\|_{L^{q_{1}}}  \left\| D^{\alpha'} f \right\|_{L^{q_{2}}}$ holds for
 $(\alpha',q ,q_{1},q_{2})$ such that $ 1 > \alpha' > 0 $, $ (q,q_{2}) \in (1, \infty)^{2} $, $ q_{1} \in ( 1, \infty]$ , and $\frac{1}{q} = \frac{1}{q_{1}} + \frac{1}{q_{2}} $.
 In the proof of Proposition $7$ of \cite{triroyrad} we need to apply this rule to control norms of the type
 $ \left\| D^{\alpha'} \left( \partial_{z} G(f,\bar{f}) F(|f|^{2}) \right) \right\|_{L^{q}}$. The nondecreasing property of $\tilde{F}$ shows that $ \tau \in [0,1]: \; \left| \left( \partial_{z} G  \, F \left( | \cdot |^{2} \right) \right)^{'} \left( \tau x + (1- \tau)y \right)  \right| \lesssim \tilde{H}(x) + \tilde{H}(y) $, with $\tilde{H}(x) :=  |x|^{\beta - 1} \tilde{F} (|x|^{2})$.} and to prove (\ref{Eqn:EstToProveFrac2}).

 \end{itemize}

\end{rem}

Hence, taking into account the above remark, the proposition above was proved in \cite{triroyrad}. Hence Proposition \ref{Prop:NonlinFracSmooth} holds for
$\beta > k^{'}-1$. It remains to prove Proposition \ref{Prop:NonlinFracSmooth}  for
$\beta = k^{'} - 1 $.

\subsubsection{Proposition $7$ in \cite{triroyrad} and technical error in \cite{triroyrad} }

In this paragraph we point out a technical error in \cite{triroyrad}. We then explain how to fix the error. Recall that
in order to apply use Proposition $7$ in \cite{triroyrad} we need to assume that $ \beta > k' - 1 $. In \cite{triroyrad} we combine for
$ n \in \{ 3,4 \} $ this proposition  ( with $ \beta := \frac{4}{n-2} $, $G(f, \bar{f}):= |f|^{\frac{4}{n-2}} f$, and
$\tilde{F}(x) := F(x) := \log^{c} \left( \log(10 + x) \right)$) with (\ref{Eqn:Strich}) to control norms
at $\dot{H}^{k}-$ regularity of the nonlinearity on small intervals $J$ for $ \frac{n}{2} <  k < \frac{n+2}{n-2}$: for example we
use estimates in \cite{triroyrad} such as
$
\left\| D^{k} \left(|u|^{\frac{4}{n-2}} u F(|u|^{2}) \right) \right\|_{L_{t}^{\frac{2(n+2)}{n+4}} L_{x}^{\frac{2(n+2)}{n+4}} (J) }
\lesssim \| u \|^{\frac{4}{n-2}}_{L_{t}^{\frac{2(n+2)}{n-2}} L_{x}^{\frac{2(n+2)}{n-2}} (J)} \| F(|u|^{2}) \|_{L_{t}^{\infty} L_{x}^{\infty} (J) } \cdot
$
While writing this paper we have observed that for $n=3$ the constraint $\beta > k' - 1$ imposes that $k' \leq 4$. Hence in order to use (\ref{Eqn:EstToProveFrac2}) one
must assume that $ k < 4 - 1 + (1-) < 4 $, which does not cover the full
range $ \frac{n}{2}  < k < \frac{n+2}{n-2}$. We can fix this error by using Proposition \ref{Prop:NonlinFracSmooth} of this paper in Section $2$: indeed the less restrictive constraint $\beta \geq k' - 1  $ allows for $n \in \{ 3,4 \}$ to use (\ref{Eqn:EstToProveFrac}) for $ \frac{n}{2} < k < \frac{n+2}{n-2} $.

\subsection{Appendix $D$: Proof of Proposition \ref{Prop:NonlinFracSmooth} for $\beta = k'-1$}
\label{SubsecProofLimit}

\subsubsection{A fractional Leibnitz-type rule}

We write below a composition rule:

\begin{prop}

Let $ 0 < \alpha < 1 $ and let $H \in \mathcal{C} \left( \mathbb{R}^{2} \right) \cap \mathcal{C}^{1} \left( \mathbb{R}^{2} - \{ (0,0) \} \right) $ that satisfies the following condition: for all $(x,y,\tau) \in \mathbb{R}^{2} \times [0,1] $ such that $ \tau x + (1- \tau) y \neq 0 $, $ \left|  H^{'}  \left( \tau x + (1- \tau) y \right) \right| \lesssim \tilde{H}(x) + \tilde{H}(y)$  for some $\tilde{H}: \mathbb{R}^{2} \rightarrow \mathbb{R}$ that is continuous at the origin $O$ and such that
$ \tilde{H}(O) \lesssim \tilde{H}(z)$ for all $z \in \mathbb{R}^{2}$. Then

\begin{equation}
\begin{array}{ll}
\left\| D^{\alpha} H(f) \right\|_{L^{q}} & \lesssim \| \tilde{H}(f) \|_{L^{q_{1}}} \| D^{\alpha} f \|_{L^{q_{2}}}
\end{array}
\label{Eqn:Leibn1}
\end{equation}
if $(q,q_{2}) \in (1,\infty)^{2}$, $q_{1} \in ( 1 , \infty] $, and $ \frac{1}{q}= \frac{1}{q_{1}} + \frac{1}{q_{2}}$.

\label{Prop:CompRule}
\end{prop}

\begin{rem}
In (\ref{Eqn:Leibn1}) and in the proof of Proposition \ref{Prop:CompRule} we abuse notation: we write $f$, $f(x)$, and $f(y)$ for $(f,\bar{f})$,
$\left( f(x), \overline{f(x)} \right)$, and $\left( f(y), \overline{f(y)} \right)$ respectively.
\end{rem}

\begin{proof}

Assume that $ (*): \left| H \left( f(y) \right) - H(f(x)) \right| \lesssim \left( \tilde{H}(f(x)) + \tilde{H}(f(y)) \right) \left| f(y) - f(x) \right|$ holds for all
$(x,y) \in \mathbb{R}^{2} \times \mathbb{R}^{2} $. Then, rewriting the proof of Proposition $5.1$, Chapter $2$ of \cite{taylor} we see that (\ref{Eqn:Leibn1}) holds. So it remains to prove $(*)$. To this end we consider the line segment $[f(x), f(y)]$. If  $O \notin [f(x),f(y)]$ then this follows from the fundamental theorem of calculus. Assume now that $O \in [f(x),f(y)]$. We may assume WLOG that $f(x) \neq f(y)$. Assume that $f(x) \neq 0$ and that $f(y) \neq 0$. Let $ \min \left( |f(x)|, |f(y)| \right) > \epsilon > 0 $. Choose two points $ ( z_{1}, z_{2} ) \in [f(x),f(y)] $ such that $ |z_{1}| \leq \epsilon $ and $z_{2}$ symmetric of $z_{1}$ with respect to $O$ such that for $ Q \in  \{ H, \tilde{H} \} $, $ \left| Q(z_{1}) - Q(z_{2}) \right| \leq \epsilon $. Swapping $z_{1}$ with $z_{2}$ if necessary, we see
that $|f(x) - f(y)| = |f(x) - z_{2}| + |z_{2} - z_{2}| + |z_{1} - f(y)|$ and that

\begin{equation}
\begin{array}{ll}
\left| H \left( f(y) \right) - H ( z_{1} ) \right|  & \lesssim \left( \tilde{H}(f(y)) + \tilde{H}(z_{1}) \right) | f(y) - z_{1}|, \; \text{and} \\
\left| H \left( f(x) \right) - H ( z_{2} ) \right|  & \lesssim \left( \tilde{H}(f(x)) + \tilde{H}(z_{2}) \right) | f(x) - z_{2}| \cdot
\end{array}
\label{Eqn:IneqH}
\end{equation}
By the triangle inequality and by letting $\epsilon \rightarrow 0 $  we see that $(*)$ holds. Assume now that  $f(x)=O$ and that $f(y) \neq O$. Let
$ |f(y)| > \epsilon > 0 $ we choose one point $z_{1} \in [O,f(y)] $ such that $ |z_{1}| \leq \epsilon $. Hence the first inequality of
(\ref{Eqn:IneqH}) holds. By letting $\epsilon \rightarrow 0$ we see that $(*)$ holds. A similar proof shows that  $(*)$ also holds if
$f(x) \neq O $ and $f(y) = O$.

\end{proof}

We then recall a product rule (see \cite{taylor} and references therein).  Let $ \alpha' \geq 0$. Then

\begin{equation}
\begin{array}{ll}
\left\| D^{\alpha'} (fg) \right\|_{L^{q}} & \lesssim  \| D^{\alpha'} f \|_{L^{q_{1}}} \| g \|_{L^{q_{2}}} + \| f \|_{L^{q_{3}}}
\| D^{\alpha'} g \|_{L^{q_{4}}}
\end{array}
\nonumber
\end{equation}
if $ (q,q_{1},q_{4}) \in (1,\infty)^{3} $ and $(q_{2},q_{3})  \in (1,\infty]^{2}$ .

\subsubsection{Proof}

We slightly modify an argument in \cite{triroyrad}. \\
\\
Let $k^{'} = 2$. Then (see \cite{triroyrad}) $ \left\| D^{2-1 + \alpha} \left( G(f,\bar{f}) F(|f|^{2}) \right) \right\|_{L^{r}} \lesssim A_{1} + A_{2} + A_{3}
$ with $ A_{1} := \left\|  D^{\alpha} \left( \partial_{z} G(f,\bar{f}) \nabla f F(|f|^{2}) \right) \right\|_{L^{r}} $,
$ A_{2} :=  \left\|  D^{\alpha} \left( \partial_{\bar{z}} G(f,\bar{f})  \overline{\nabla f} F(|f|^{2} \right)) \right\|_{L^{r}} $, and
$A_{3} :=  \left\| D^{\alpha} \left( F^{'}(|f|^{2}) \Re \left( \bar{f} \nabla f \right) G(f,\bar{f})  \right) \right\|_{L^{r}} $. \\
Let $ H (x) : = \partial_{z} G(x,\bar{x})  F(|x|^{2}) $ and $ \tilde{H}(x) :=  \left| \tilde{F} \left( |x|^{2} \right) \right|$. Then $ \left| H^{'}(\tau x + (1- \tau) y ) \right| \lesssim \tilde{H}(x)  + \tilde{H}(y) $. The composition rule and the product rule show that

\begin{equation}
\begin{array}{ll}
A_{1} & \lesssim  \left\| D^{\alpha} \left( \partial_{z} G(f,\bar{f}) F(|f|^{2}) \right) \right\|_{L^{r_{4}}} \| D f \|_{L^{r_{5}}}
+ \left\|  \partial_{z} G(f,\bar{f}) F(|f|^{2})   \right\|_{L^{r_{6}}} \left\| D^{(2-1) +  \alpha}  f \right\|_{L^{r_{2}}} \\
& \lesssim \left\| \tilde{F}(|f|^{2}) \right\|_{L^{r_{3}}} \left\| D^{\alpha} f \right\|_{L^{r_{8}}} \left\| D f \right\|_{L^{r_{5}}}
+ \| f \|_{L^{r_{1}}} \left\| D^{(2-1) + \alpha} f \right\|_{L^{r_{2}}} \left\| \tilde{F}(|f|^{2}) \right\|_{L^{r_{3}}} \cdot
\end{array}
\nonumber
\end{equation}
Here $\frac{1}{r} = \frac{1}{r_{4}} + \frac{1}{r_{5}} $, $ \frac{1}{r} = \frac{1}{r_{6}} + \frac{1}{r_{2}}$, $\frac{1}{r_{4}} =\frac{1}{r_{3}} + \frac{1}{r_{8}} $, $\frac{1}{r_{5}}  = \frac{1 - \theta_{1}}{r_{1}} +  \frac{\theta_{1}}{r_{2}}$, and  $\theta_{1} = \frac{1}{1+ \alpha} $.
Notice that these relations imply that $\frac{1}{r_{8}} = \frac{\theta_{1}}{r_{1}} + \frac{1 - \theta_{1}}{r_{2}}$. Hence

\begin{equation}
\begin{array}{l}
\| D^{\alpha} f \|_{L^{r_{8}}} \lesssim \| f \|^{\theta_{1}}_{L^{r_{1}}} \| D^{(2-1) + \alpha} f \|^{1- \theta_{1}}_{L^{r_{2}}}, \; \text{and} \\
\| D f \|_{L^{r_{5}}} \lesssim \| f \|^{1 - \theta_{1}}_{L^{r_{1}}} \| D^{(2-1) + \alpha} f \|^{\theta_{1}}_{L^{r_{2}}} \cdot
\end{array}
\nonumber
\end{equation}
Hence $A_{1} \lesssim \text{R.H.S of (\ref{Eqn:EstToProveFrac})} $. Similarly
$A_{2} \lesssim \text{R.H.S of (\ref{Eqn:EstToProveFrac})} $. We also have

\begin{equation}
\begin{array}{ll}
A_{3} \lesssim  \sum \limits_{\tilde{f} \in \{ f ,\bar{f}\}} \left\| D^{\alpha} \left(  F^{'} (|f|^{2}) \tilde{f}  G(f, \bar{f}) \right) \right\|_{L^{r_{4}}}
\| D f \|_{L^{r_{5}}}  +  \sum \limits_{\tilde{f} \in \{ f ,\bar{f}\}}  \| D^{(2-1) + \alpha} f \|_{L^{r_{2}}} \| F^{'}(|f|^{2}) \tilde{f} G(f,\bar{f}) \|_{L^{r_{6}}} \\
\lesssim  A_{3,1} + A_{3,2}
\end{array}
\nonumber
\end{equation}
We have $A_{3,2} \lesssim  \text{R.H.S of (\ref{Eqn:EstToProveFrac})}$. The composition rule shows that $A_{3,1} \lesssim  \text{R.H.S of (\ref{Eqn:EstToProveFrac})}$. \\
\\
Assume that Proposition \ref{Prop:NonlinFracSmooth} holds for $k'$. Let us prove that Proposition \ref{Prop:NonlinFracSmooth} also holds for $k' + 1$. Following
\cite{triroyrad} we have  $ \left\| D^{k^{'} + \alpha} \left( G(f,\bar{f}) F(|f|^{2}) \right) \right\|_{L^{r}} \lesssim A^{'}_{1} + A^{'}_{2} + A^{'}_{3}
$ with $ A^{'}_{1} := \left\|  D^{k^{'} - 1 + \alpha} \left( \partial_{z} G(f,\bar{f}) \nabla f F(|f|^{2}) \right) \right\|_{L^{r}} $,
$ A^{'}_{2} :=  \left\|  D^{k^{'} -1 + \alpha} \left( \partial_{\bar{z}} G(f,\bar{f})  \overline{\nabla f} F(|f|^{2}) \right) \right\|_{L^{r}} $, and
$A^{'}_{3} :=  \left\| D^{k^{'} - 1 + \alpha} \left( \Re ( \bar{f} \nabla f ) G(f,\bar{f}) \right) \right\|_{L^{r}} $. We estimate $A^{'}_{3}$. Applying the induction assumption to the functions $\check{F}(x) := x F^{'}(x)$, $ G_{1} (x, \bar{x}) := \frac{G(x,\bar{x})}{x} $,  $ G_{2} (x, \bar{x}) := \frac{G(x,\bar{x})}{x}$ , and $F$ we see that

\begin{equation}
\begin{array}{ll}
A^{'}_{3} & \lesssim \sum \limits_{\tilde{f} \in \{f,\bar{f}\}}  \left\| D^{k^{'}-1 + \alpha} \left[ G(f,\bar{f}) F^{'}(|f|^{2}) \tilde{f} \right]  \right\|_{L^{r^{'}_{4}}} \| D f \|_{L^{r^{'}_{5}}} + \| D^{k^{'} + \alpha} f
\|_{L^{r_{2}}} \| G(f,\bar{f}) F^{'}(|f|^{2}) \tilde{f} \|_{L^{r_{6}}} \\
& \lesssim \| f \|^{\beta - 1}_{L^{r_{1}}} \| D^{k^{'}-1 + \alpha} f \|_{L^{r_{8}^{'}}}
\| \tilde{F}(|f|^{2}) \|_{L^{r_{3}}} \| D f \|_{L^{r_{5}^{'}}} + \| D^{k^{'} + \alpha} f
\|_{L^{r_{2}}} \| f
\|^{\beta}_{L^{r_{1}}} \| \tilde{F}(|f|^{2}) \|_{L^{r_{3}}} \\
& \lesssim \| f \|^{\beta}_{L^{r_{1}}} \| D^{k^{'} + \alpha} f \|_{L^{r_{2}}} \| \tilde{F}(|f|^{2}) \|_{L^{r_{3}}},
\end{array}
\nonumber
\end{equation}
where at the last line we used

\begin{equation}
\begin{array}{l}
\| D f \|_{L^{r_{5}^{'}}} \lesssim \| f \|^{1 - \theta_{1}^{'}}_{L^{r_{1}}} \| D^{(k^{'}+1)-1 + \alpha} f \|^{\theta_{1}^{'}}_{L^{r_{2}}}, \; \text{and} \\
\| D^{k' -1+ \alpha} f \|_{L^{r_{8}^{'}}} \lesssim \| f \|^{\theta_{1}^{'}}_{L^{r_{1}}} \| D^{(k^{'}+1)-1 + \alpha} f \|^{1 - \theta_{1}^{'}}_{L^{r_{2}}} \cdot
\end{array}
\nonumber
\end{equation}
Here $\frac{1}{r_{5}^{'}} = \frac{1- \theta_{1}^{'}}{r_{1}} + \frac{\theta_{1}^{'}}{r_{2}}$ with
$\theta_{1}^{'} = \frac{1}{k^{'} + \alpha}$, $\frac{1}{r} = \frac{1}{r_{4}^{'}} + \frac{1}{r_{5}^{'}}$, and $\frac{1}{r_{4}^{'}}
=  \frac{\beta -1 }{r_{1}} + \frac{1}{r_{8}^{'}} + \frac{1}{r_{3}} $. Observe that these relations imply that
$\frac{1}{r_{8}^{'}} = \frac{\theta_{1}^{'}}{r_{1}} + \frac{1- \theta_{1}^{'}}{r_{2}}$. We also have

\begin{equation}
\begin{array}{ll}
A_{1}^{'} & \lesssim \| D^{k' + \alpha } f \|_{L^{r_{2}}} \| \partial_{z} G(f,\bar{f}) F(|f|^{2}) \|_{L^{r_{6}}}
+ \left\| D^{k'-1 + \alpha} \left( \partial_{z} G(f,\bar{f}) F(|f|^{2}) \right) \right\|_{L^{r_{4}^{'}}} \| D f \|_{L^{r_{5}^{'}}} \\
& \lesssim  \| f \|^{\beta}_{L^{r_{1}}} \| D^{k^{'} + \alpha} f \|_{L^{r_{2}}} \| \tilde{F}(|f|^{2}) \|_{L^{r_{3}}} + \| f \|^{\beta -1}_{L^{r_{1}}}  \| D^{k'-1 + \alpha} f \|_{L^{r_{8}^{'}}}  \| \tilde{F}(|f|^{2}) \|_{L^{r_{3}}} \| D f \|_{L^{r_{5}^{'}}} \\
& \lesssim  \| f \|^{\beta}_{L^{r_{1}}} \| D^{k^{'} + \alpha} f \|_{L^{r_{2}}} \| \tilde{F}(|f|^{2}) \|_{L^{r_{3}}} \cdot
\end{array}
\nonumber
\end{equation}
Similarly $ A^{'}_{2} \lesssim \| f \|^{\beta}_{L^{r_{1}}} \| D^{k^{'} + \alpha} f \|_{L^{r_{2}}} \| \tilde{F}(|f|^{2}) \|_{L^{r_{3}}} $.

\end{document}